\tikzset{node distance=2cm, auto}
\tikzstyle{vertex}=[circle, draw, inner sep=0pt, minimum size=6pt]
\numberwithin{equation}{section}
\newtheorem*{theorem*}{Theorem}
\newtheorem*{corollary*}{\bf Corollary}
\newtheorem*{remark*}{\bf Remark}
\newtheorem{theorem}{Theorem}[section]
\newtheorem{corollary}[theorem]{Corollary}
\newtheorem{lemma}[theorem]{Lemma}
\newtheorem{proposition}[theorem]{Proposition}
\newtheorem{remark}[theorem]{Remark}
\newcommand{\eat}[1]{}
\title{On torus quotients of Schubert varieties in orthogonal Grassmannian-II}
\author[A. Nayek]{Arpita Nayek}
\address{Arpita Nayek\\
	Department of Mathematics\\ IIT Bombay, Powai, Mumbai\\ 400076, India.}
\email{arpitan@math.iitb.ac.in}
\author[P. Saha]{Pinakinath Saha}
\address{Pinakinath Saha\\
	Department of Mathematics\\ IIT Bombay, Powai, Mumbai \\ 400076, India.}
\email{psaha@math.iitb.ac.in}
\subjclass[2010]{14M15}
\keywords{~ orthogonal~grassmannian, ~GIT-quotient, ~line bundle, ~semistable point}
\begin{document}
	\begin{abstract}	
		Let $G=SO(8n+4,\mathbb{C})$ ($n\ge 1$). Let $B$ be a Borel subgroup of $G$ containing a maximal torus $T$ of $G.$ Let $P (\supset B)$ denote the maximal parabolic subgroup of $G$ corresponding to the simple root $\alpha_{4n+2}$. In this article, we prove projective normality of the GIT quotients of certain Schubert varieties in the orthogonal Grassmannian $G/P$ with respect to the descent of a suitable $T$-linearized very ample line bundle. 
	\end{abstract}
	\maketitle
	\section{Introduction}\label{section1}
	Let $G=SO(8n+4,\mathbb{C}).$ Let $B$ be a Borel subgroup of $G$ containing a maximal torus $T$ of $G.$ Let $W$ be the Weyl group of $G$ with respect to $T.$ Let $P(\supset B)$ denote the maximal parabolic subgroup of $G$ corresponding to the simple root $\alpha_{4n+2}$. Let $\varpi_{4n+2}$ be the fundamental weight corresponding to the simple root $\alpha_{4n+2}.$
	Let $W_{P}$ be the Weyl group of $P.$
	Let $W^{P}$ denote the set of minimal coset representatives of $W/W_{P}.$ Let $``\le"$ denote the Bruhat-Chevalley order on $W^{P}.$ For $w\in W^{P},$ let $X(w):=\overline{BwP/P}$ denote the Schubert variety in $G/P$ corresponding to $w.$  
		
	To state the main result of our article we introduce some notations.
	
	For $1\le i\le 2n,$ let $\tau_{2i-1}:=\bigg\{ \begin{matrix}   s_{2i-1}s_{2i}\cdots s_{4n-1}s_{4n}s_{4n+1} & \text{~for~ $i$ ~even}\\ s_{2i-1}s_{2i}\cdots s_{4n-1}s_{4n}s_{4n+2} &\text{~ for $i$~ odd}
	\end{matrix}$
\noindent where $s_{i}$ denotes the simple reflection corresponding to the simple root $\alpha_{i}.$ 

Let $\lambda=\varpi_{4n+2}.$ Let $v_1=s_{4n+2}\prod_{i=2n}^{1}\tau_{2i-1}.$ Then $v_1$ is a unique minimal element in $W^{P}$ such that $v_1(\lambda)\leq 0.$ Consider the following $v_i$'s such that $v_1 \leq v_i$ for all $2 \leq i \leq 6:$
	\begin{center}
		\begin{tikzpicture}[scale=.5]
			\node (e) at (0,0)  {$v_1$};
			\node (z) at (-2.2,-1.2) {$s_{4n-2}$};
			\node (y) at (2,-1.2) {$s_{4n}$};
			\node (g) at (-3,-3) {$v_2$};
			\node (b) at (3,-3) {$v_3$};
			\node (w) at (5.7,-4.8) {$s_{4n+1}$};
			\node (v) at (-2.5,-4.8) {$s_{4n}$};
			\node (u) at (2.5,-4.8) {$s_{4n-2}$};
			\node (d) at (0,-6) {$v_4$};
			\node (p) at (6,-6) {$v_5$};
			\node (s) at (5.5,-7.6) {$s_{4n-2}$};
			\node (q) at (.5,-7.6) {$s_{4n+1}$};
			\node (a) at (3,-9) {$v_6$};
			\node (z) at (3,-10) {Figure 1: Sublattice of the Bruhat lattice $W^{P}$};
			\draw (e) -- (g) -- (d) -- (a) -- (p) -- (b) -- (e);
			\draw (b) -- (d);
		\end{tikzpicture}
	\end{center}
	$v_2=s_{4n-2}v_1, v_3=s_{4n}v_{1}, v_{4}=s_{4n-2}s_{4n}v_{1}, v_{5}=s_{4n+1}s_{4n}v_{1}, v_{6}=s_{4n-2}s_{4n+1}s_{4n}v_{1}.$
	Since $v_1 \leq v_{i}$ and $v_{1}(\lambda)\leq 0,$ we have $v_{i}(\lambda) \leq 0$ for all $2 \leq i \leq 6.$ Thus, by \cite[Lemma 2.1, p.470]{KP} $X(v_i)^{ss}_T(\mathcal{L}(2\lambda))\neq \emptyset$ for all $1 \leq i \leq 6.$ 
	
	In this article, our main results are the following.
	\begin{theorem}\label{cor3.8}
		The GIT quotient $T \backslash \backslash (X(v_6))^{ss}_{T}(\mathcal{L}(4\lambda))$ is projectively normal with respect to the descent of the $T$-linearized very ample line bundle $\mathcal{L}(4\lambda).$
	\end{theorem}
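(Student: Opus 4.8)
The plan is to reduce the statement to a purely combinatorial property of the $T$-weights appearing on $X(v_6)$ and then to verify that property by a short zero-sum analysis. Write $Y = T\backslash\backslash (X(v_6))^{ss}_T(\mathcal{L}(4\lambda))$ and let $\mathcal{M}$ denote the descent of $\mathcal{L}(4\lambda)$ to $Y$. By descent, the homogeneous coordinate ring of $(Y,\mathcal{M})$ is $R=\bigoplus_{d\ge 0} H^0(Y,\mathcal{M}^{\otimes d})=\bigoplus_{d\ge 0} H^0(X(v_6),\mathcal{L}(4d\lambda))^T$. Since Schubert varieties are normal and good quotients of normal varieties are normal, $Y$ is normal; hence projective normality of $(Y,\mathcal{M})$ is equivalent to $R$ being generated as a $\mathbb{C}$-algebra in degree one, i.e. to the surjectivity of the multiplication maps $\mathrm{Sym}^d\big(H^0(X(v_6),\mathcal{L}(4\lambda))^T\big)\to H^0(X(v_6),\mathcal{L}(4d\lambda))^T$ for every $d\ge 1$. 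This is the statement I would aim to prove.

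Next I would invoke standard monomial theory. Because $\lambda=\varpi_{4n+2}$ is minuscule, $H^0(X(v_6),\mathcal{L}(N\lambda))$ has a basis of standard monomials $p_{\tau_1}p_{\tau_2}\cdots p_{\tau_N}$ indexed by chains $v_6\ge \tau_1\ge \tau_2\ge\cdots\ge \tau_N$ in $W^P$, where $p_\tau$ is the weight vector of $T$-weight $\tau(\lambda)$. Writing $\tau(\lambda)=\tfrac12\sum_{i=1}^{4n+2}\eta_i^{(\tau)}\epsilon_i$ with $\eta_i^{(\tau)}\in\{\pm1\}$ (an even number of which are $-1$) in the standard coordinates $\epsilon_1,\dots,\epsilon_{4n+2}$, such a monomial is $T$-invariant precisely when $\sum_{j=1}^N \tau_j(\lambda)=0$, i.e. when $\sum_{j=1}^N \eta_i^{(\tau_j)}=0$ for each coordinate $i$. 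The crucial simplification is that any sub-multiset of the chain $\{\tau_1,\dots,\tau_N\}$ is again a chain, hence again a standard monomial; consequently, partitioning the index multiset of a standard monomial into blocks literally factors the monomial as a product of the corresponding standard monomials. Thus the desired surjectivity is equivalent to the combinatorial claim: every $T$-invariant standard monomial of degree $4d$ on $X(v_6)$ can be partitioned into $d$ blocks of size $4$, each of which is itself $T$-invariant.

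I would then isolate this as a zero-sum problem for the finite set $S=\{\tau(\lambda): \tau\in W^P,\ \tau\le v_6\}$ of admissible weights. A zero-sum multiset of elements of $S$ has even cardinality, since in each coordinate the $\pm\tfrac12$ contributions must balance; in particular there are no $T$-invariant monomials of odd degree. The heart of the argument is the bound that every primitive (indecomposable) zero-sum multiset drawn from $S$ has size $2$ or $4$; equivalently, any zero-sum multiset of size at least $6$ contains a proper zero-sum sub-multiset of size $2$ or $4$. Granting this, a degree-$4d$ invariant monomial decomposes into primitive blocks of sizes $2$ and $4$; the number $a$ of size-$2$ blocks satisfies $a+2c=2d$ (where $c$ is the number of size-$4$ blocks), so $a$ is even, and pairing the size-$2$ blocks yields exactly $a/2+c=d$ blocks of size $4$, completing the reduction.

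The main obstacle, and the step I expect to require genuine work, is establishing this size bound on primitive zero-sums, because it forces one to describe the admissible set $S$ explicitly. Concretely, I would compute the sign patterns $(\eta_i^{(\tau)})$ for $\tau\le v_6$ using the minuscule poset of $W^P$ in type $D_{4n+2}$, observing that the reflections $s_{4n-2},s_{4n},s_{4n+1}$ relating $v_1,\dots,v_6$ act only on the coordinates near positions $4n-2,\dots,4n+2$, so that all but a bounded number of coordinates are frozen across $S$. This confines the zero-sum analysis to a low-dimensional sublattice, where a direct case analysis (a Davenport-constant type estimate for the resulting configuration) should yield that no primitive zero-sum exceeds length $4$. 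Verifying that the order ideal $\{\tau\in W^P:\tau\le v_6\}$ is small enough for this confinement to hold, and that the few genuinely varying coordinates cannot support a primitive zero-sum of length $6$, is the delicate point on which the whole argument rests.
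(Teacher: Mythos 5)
Your overall framework --- reduce to degree-one generation of $\bigoplus_{d\ge 0}H^0(X(v_6),\mathcal{L}(4d\lambda))^T$ via normality, use the minuscule standard monomial basis, identify $T$-invariance with a zero-sum condition on sign patterns, factor invariant standard monomials into invariant standard sub-monomials (a sub-multiset of a chain is a chain), and finish with a parity argument pairing size-$2$ blocks --- is exactly the skeleton of the paper's proof (its Lemma 3.1 and Theorem 3.5). But the combinatorial bound on which your entire argument rests, namely that every primitive zero-sum multiset drawn from $S=\{\tau(\lambda):\tau\le v_6\}$ has size $2$ or $4$, is false. The paper's generator $Z_1$ is an explicit counterexample: it is a $T$-invariant standard monomial whose six weights, recorded by their sets of plus-coordinates in $\{1,\dots,4n+2\}$, are
\begin{align*}
&\{1,3,\ldots,4n-3\}\cup\{4n-2,4n-1,4n\},\qquad \{1,3,\ldots,4n-3\}\cup\{4n-2,4n+1,4n+2\},\\
&\{1,3,\ldots,4n-3\}\cup\{4n,4n+1,4n+2\},\qquad\ \ \{2,4,\ldots,4n-4\}\cup\{4n-2,4n\},\\
&\{2,4,\ldots,4n-4\}\cup\{4n-1,4n+1\},\qquad\quad\ \ \{2,4,\ldots,4n-4\}\cup\{4n-1,4n+2\},
\end{align*}
and these form a chain whose top element is exactly $v_6$. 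Each coordinate carries three $+$ and three $-$, so the multiset is zero-sum; but no two of the six sign patterns are negatives of each other, so there is no zero-sum pair, and consequently no zero-sum $4$-subset either (its two-element complement would be a zero-sum pair). This is a primitive zero-sum of size $6$ inside $S$. Your supporting heuristic also fails: the order ideal $\{\tau\le v_6\}$ is not confined near the positions $4n-2,\dots,4n+2$, since it contains both the identity coset (all-plus weight) and $v_1$ (whose weight has minus signs in the $2n+2$ positions $1,3,\ldots,4n-1,4n+1,4n+2$); the weights in $S$ vary in every coordinate.

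The gap is not repairable within your purely multiset-theoretic scheme, because the partition statement you aim to prove is itself false, not merely your proposed proof of it: $Z_1^{2}$ is a $T$-invariant standard monomial of degree $4d$ with $d=3$, and it admits no partition into three $T$-invariant blocks of size $4$ (any such block would force either a zero-sum pair or a zero-sum $4$-subset among the six patterns above, which do not exist). Surjectivity of the multiplication maps is nevertheless true, but only because one may use linear relations rather than monomial factorizations --- this is precisely where your claimed ``equivalence'' breaks, as only the sufficiency direction holds. The paper's route is: prove that every invariant standard monomial factors off an invariant block of size $2$, $4$ \emph{or} $6$ (Lemma 3.1, a long tableau case analysis); then invoke the Pfaffian straightening relations (Theorem 2.3 and Lemmas 3.2, 3.3) to rewrite the size-$6$ generators as $Z_1=X_2Y_4$, $Z_2=X_2Y_3$ and to express $Y_1,Y_2,Y_4$ in terms of $Y_3$ and the products $X_iX_j$; only after this algebraic elimination does the parity argument you describe complete the proof. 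The missing ingredient in your proposal is exactly this use of straightening laws to dispose of the primitive size-$6$ invariants.
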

	
	\begin{proposition}\label{prop1.1} 
		\begin{itemize}
			\item[(i)] The GIT quotient $T\backslash\backslash(X(v_1))^{ss}_{T}(\mathcal{L}(4\lambda))$ is a point.
			
			\item[(ii)] The GIT quotient $T\backslash\backslash(X(v_2))^{ss}_{T}(\mathcal{L}(4\lambda))$ is isomorphic to $(\mathbb{P}^{1},\mathcal{O}_{\mathbb{P}^{1}}(2))$ as a polarized variety.
			
			\item[(iii)] The GIT quotient $T\backslash\backslash(X(v_3))^{ss}_{T}(\mathcal{L}(4\lambda))$ is isomorphic to $(\mathbb{P}^{1},\mathcal{O}_{\mathbb{P}^{1}}(2))$ as a polarized variety.
			
			\item[(iv)] The GIT quotient $T\backslash\backslash(X(v_4))^{ss}_{T}(\mathcal{L}(4\lambda))$ is isomorphic to $(\mathbb{P}^{3},\mathcal{O}_{\mathbb{P}^{3}}(2))$ as a polarized variety.

			\item[(v)] The GIT quotient $T\backslash\backslash(X(v_5))^{ss}_{T}(\mathcal{L}(4\lambda))$ is isomorphic to $(\mathbb{P}^{2},\mathcal{O}_{\mathbb{P}^{2}}(2))$ as a polarized variety.
		\end{itemize}	
	\end{proposition}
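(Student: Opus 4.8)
The plan is to compute, for each $i$, the homogeneous coordinate ring of the quotient and identify it with the asserted polarized variety in one stroke. Write
\[
R(v_i) \;=\; \bigoplus_{m \ge 0} H^0\bigl(X(v_i), \mathcal{L}(4\lambda)^{\otimes m}\bigr)^T,
\]
so that the GIT quotient is $\mathrm{Proj}\,R(v_i)$ with the descent of $\mathcal{L}(4\lambda)$ as its polarization. Since $\mathcal{L}(4\lambda)^{\otimes m} = \mathcal{L}(2\lambda)^{\otimes 2m}$, the ring $R(v_i)$ is exactly the second Veronese subring of
\[
R'(v_i) \;:=\; \bigoplus_{m \ge 0} H^0\bigl(X(v_i), \mathcal{L}(2\lambda)^{\otimes m}\bigr)^T .
\]
By the general isomorphism $\mathrm{Proj}\,S^{(2)} \cong \mathrm{Proj}\,S$ carrying $\mathcal{O}(1)$ to $\mathcal{O}(2)$, it therefore suffices to prove that $R'(v_i)$ is a polynomial ring in $d_i+1$ variables, where $(d_1,\dots,d_5) = (0,1,1,3,2)$; this immediately yields $\mathrm{Proj}\,R(v_i) \cong (\mathbb{P}^{d_i}, \mathcal{O}(2))$ and hence all five parts at once.

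To analyze $R'(v_i)$ I would use standard monomial theory for the minuscule weight $\lambda = \varpi_{4n+2}$: a basis of $H^0(X(v_i), \mathcal{L}(2\lambda)^{\otimes m})$ is given by the degree-$2m$ standard monomials $p_{\tau_1}\cdots p_{\tau_{2m}}$ with $v_i \ge \tau_1 \ge \cdots \ge \tau_{2m}$ in $W^P$, and such a monomial has $T$-weight $-\sum_k \tau_k(\lambda)$. Thus $R'(v_i)_m$ has a basis indexed by the weight-zero standard monomials of degree $2m$. Because $4n+2$ is even, $w_0 = -\mathrm{id}$ in the Weyl group of $D_{4n+2}$, so the assignment $\tau \mapsto \tau^*$, with $\tau^*$ the minimal coset representative of $w_0\tau W_P$, is an order-reversing involution of $W^P$ satisfying $\tau^*(\lambda) = -\tau(\lambda)$; as the representation is minuscule, $\tau^*$ is the \emph{only} element whose weight is $-\tau(\lambda)$. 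Consequently a degree-$2$ monomial $p_\sigma p_\rho$ is $T$-invariant precisely when $\rho = \sigma^*$, and it is standard on $X(v_i)$ precisely when $\tau^* \le \tau \le v_i$ for $\tau = \sigma$. This gives the clean count
\[
\dim R'(v_i)_1 \;=\; \#\{\tau \in W^P : \tau^* \le \tau \le v_i\}.
\]

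The first concrete task is to pin down the Bruhat interval $[e,v_i]$ and the sign vectors $\tau(\lambda) = \tfrac12\sum_j \eta_j\epsilon_j$ (with an even number of $\eta_j = -1$) of its elements, starting from the explicit reduced word for $v_1$ and successively left-multiplying by $s_{4n-2}, s_{4n}, s_{4n+1}$. With the weights in hand one counts the self-paired elements $\{\tau : \tau^* \le \tau \le v_i\}$ and checks the values $1,2,2,4,3$, matching $d_i+1$. Part (i) is then the base case: for $v_1$ the only self-paired element below $v_1$ is $v_1$ itself (one checks $v_1^* \le v_1$), so $R'(v_1)$ is generated by the single invariant $p_{v_1}p_{v_1^*}$ and $\mathrm{Proj}\,R(v_1)$ is a point.

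The remaining and most delicate step is to upgrade this degree-one count to the statement that $R'(v_i)$ is a \emph{free} polynomial ring. For generation in degree one I would run the same standard-monomial straightening analysis that underlies \Cref{cor3.8}, now over the smaller interval $[e,v_i] \subseteq [e,v_6]$, concluding that $R'(v_i)$ is generated by the $d_i+1$ invariants $p_\tau p_{\tau^*}$. Freeness then amounts to showing that the weight-zero standard monomials of degree $2m$ are exactly the $\binom{m+d_i}{d_i}$ ordered products of these generators, with no linear relation surviving after straightening; equivalently, that the $d_i+1$ generators are algebraically independent, so that $\mathrm{Proj}\,R'(v_i)$ is all of $\mathbb{P}^{d_i}$ rather than a proper subvariety. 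I expect this to be the main obstacle: one must control the straightening laws among the degree-$2$ generators and determine the Hilbert function of $R'(v_i)$ exactly. In particular, for $v_4$ and $v_5$, with $4$ and $3$ generators respectively, the crux is to rule out any quadratic relation among the $p_\tau p_{\tau^*}$, which is precisely where the fine combinatorics of the interval $[e,v_i]$ and the duality $\tau\mapsto\tau^*$ must be exploited.
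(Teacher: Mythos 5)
Your reduction is sound as far as it goes: passing to the second Veronese subring, the minuscule standard-monomial description of the invariants, the duality $\tau\mapsto\tau^*$ (valid here because $w_0=-\mathrm{id}$ for $D_{4n+2}$), and the identity $\dim R'(v_i)_1=\#\{\tau\in W^P:\tau^*\le\tau\le v_i\}$ are all correct, and the counts $1,2,2,4,3$ do match the surviving generators in the paper ($X_1$; $X_1,X_2$; $X_1,X_3$; $X_1,\dots,X_4$; $X_1,X_3,X_5$). But what you have written is a plan, not a proof: the two statements that carry all of the content --- (a) $R'(v_i)$ is generated in degree one, and (b) the $d_i+1$ degree-one invariants are algebraically independent --- are exactly the ones you defer (``I would run the same standard-monomial straightening analysis,'' ``I expect this to be the main obstacle,'' ``the crux is to rule out any quadratic relation''). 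Without (a), even part (i) is unproved: knowing $\dim R'(v_1)_1=1$ does not make $\mathrm{Proj}\,R(v_1)$ a point, since a priori there could be new invariants in higher degrees. Without (b), for $v_4$ and $v_5$ you only obtain a closed embedding of the quotient into $\mathbb{P}^{3}$ resp.\ $\mathbb{P}^{2}$ (a graded ring generated by $d_i+1$ elements of degree one has $\mathrm{Proj}$ a closed subscheme of $\mathbb{P}^{d_i}$), not the asserted equality, and hence not the claimed polarized isomorphism.

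These gaps are precisely what the paper's proof supplies, and by a route worth noting: it never re-runs the straightening analysis over the smaller Bruhat intervals. Generation comes for free from the $v_6$ case, since linear reductivity of $T$ makes the restriction $H^{0}(X(v_6),\mathcal{L}^{\otimes k}(4\lambda))^{T}\to H^{0}(X(v_i),\mathcal{L}^{\otimes k}(4\lambda))^{T}$ surjective; then \cref{lemma5.4}, the vanishing $Y_3|_{X(v_i)}=0$ for $1\le i\le 5$, and the relation $X_2X_3=X_1X_4-Y_1$ from \cref{lem5.2} identify the degree-one piece and its generators on each $X(v_i)$. Algebraic independence is then established concretely: the invariant ring is presented by explicit quadratic relations, confluence of the resulting reduction system is checked via the diamond lemma \cite{bergman}, and the relation ideal $I$ is compared with the ideal $J$ of $2\times 2$ minors cutting out the Veronese image of $\mathbb{P}^{3}$ (resp.\ $\mathbb{P}^{2}$) by a Krull dimension computation ($\dim A/I=\dim A/J$, done in Macaulay2). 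Some computation of this kind --- confluence plus a dimension count, or an exact determination of the Hilbert function of weight-zero standard tableaux over $[e,v_i]$ --- must actually be carried out for your argument to close; nothing in your proposal substitutes for it.
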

	
	When $G=SO(8n, \mathbb{C})(n\ge 1),$ analogous statements of \cref{cor3.8} and \cref{prop1.1} are proved in \cite{NS-I}.
	
	The layout of the article is as follows. In \cref{section2}, we introduce some notation and recall some preliminaries. In \cref{section3}, we prove \cref{cor3.8} and \cref{prop1.1}.
	
	\section{Notation and Preliminaries}\label{section2}
	We refer \cite{Hum1}, \cite{Hum2}, \cite{Jan}, \cite{LS} and \cite{Mumford} for preliminaries in algebraic groups, Lie algebras, Standard Monomial Theory and Geometric Invariant Theory.
	
	\subsection{Prjective normality}
	A polarized variety $(X,\mathcal{L}),$ where $\mathcal{L}$ is a very-ample line bundle on a projective variety $X$ is said to be {\em projectively normal} if its homogeneous coordinate ring $\bigoplus_{k\in \mathbb{Z}_{\geq 0}} H^{0}(X,\mathcal{L}^{\otimes k})$ is integrally closed and is generated as a $\mathbb{C}$-algebra by $H^0(X,\mathcal{L})$ (see \cite[Chapter II, Exercise 5.14, p.126]{R}).
	
	\subsection{Geometric Invariant Theory}
	Let $G,P$ and $T$ be as in the previous section.
	
	Let $\mathcal{L}$ be a $T$-linealized very ample line bundle on $G/P.$ We also denote the restriction of the line bundle $\mathcal{L}$ on $X(w)$ by $\mathcal{L}.$ 
	
	A point $p \in X(w)$ is said to be a {\em semi-stable} point with respect to the $T$-linearized line bundle $\mathcal{L}$ if there is a $T$-invariant section $s \in H^0(X(w),\mathcal{L}^{\otimes m})$ for some positive integer $m$ such that $s(p)\neq 0.$ We denote the set of all semi-stable points of $X(w)$ with respect to $\mathcal{L}$ by $X(w)^{ss}_{T}(\mathcal{L}).$ 
	
	A point $p$ in $X(w)^{ss}_{T}(\mathcal{L})$ is said to be a {\em stable} point if $T$-orbit of $p$ is closed in $X(w)^{ss}_{T}(\mathcal{L})$ and stabilizer of $p$ in $T$ is finite. We denote the set of all stable points of $X(w)$ with respect to $\mathcal{L}$ by $X(w)^{s}_{T}(\mathcal{L}).$

	\subsection{Notation and preliminaries for $SO(8n+4, \mathbb{C})$} 
	
	Let us recall some well-known properties of $SO(8n+4, \mathbb{C})$ (see \cite{LS}).
	Let $V= \mathbb{C}^{8n+4}$ together with a non-degenerate symmetric bilinear form $(-,-).$ Let $\{v_{1},\ldots , v_{8n+4} \}$ be the standard basis of $V.$ Let $E=\begin{pmatrix}
		0& J\\
		J&0\\
	\end{pmatrix}$ be the matrix of
	the form $(-,- )$ with respect to the standard basis $\{v_{1},\ldots , v_{8n+4} \},$ where $J$ denotes the anti-diagonal matrix of size $4n+2 \times 4n+2$ with all the entries are $1.$ 
	
	Let $H=SL(V).$ Let $\sigma : H\longrightarrow H$ be the involution defined by $\sigma(A)= E(A^{t})^{-1}E,$
	where $A\in H$ and $A^{t}$ denote transpose of $A.$ Let $G={H}^{\sigma}.$ Then $G=SO(V).$ 
	Let $T_{H}$ (resp. $B_{H}$) be the maximal torus in $H$ consisting of diagonal matrices (resp. the Borel subgroup in $H$ consisting of the upper triangular matrices). Let $T=T_{H}^{\sigma}$ and $B=B_{H}^{\sigma}.$ Then $B$ is a Borel subgroup of $G$ containing $T$ as a maximal torus of $G.$ Let $N_{G}(T)$ (resp. $N_{H}(T_{H})$) be the normalizer in $G$ (resp. $H$) of $T$ (resp. $T_{H}$). Then we have $N_{G}(T)\subseteq N_{H}(T_{H}).$ 
	Thus, we obtain a homomorphism $$W\longrightarrow W_{H},$$ where $W,$ $W_{H}$ denote the Weyl groups of $G,$ $H$ with respect to $T,$  $T_{H}$ respectively. 
	
	\subsubsection{Weyl group}
	The Weyl group $W_{H}$ is identified with $S_{8n+4},$ the symmetric group on the $8n+4$ letters $\{1,2,\ldots,8n+4\}.$
	Further, $\sigma$ induces an involution $W_{H}\longrightarrow W_{H}$ given by
	$$w=(a_{1},\ldots , a_{8n+4})\mapsto \sigma(w)=(c_{1},\ldots , c_{8n+4}),$$
	where $w=(a_{1},\ldots , a_{8n+4})$ is written in one line notation of the permutation and $c_{i} = 8n+5-a_{8n+5-i}.$ Thus, we have $W =\{ w \in W_{H}^{\sigma}: ~w~ \text {is an even permutation in}~ W_{H}  \}.$ In one line notation, we have
	$W=\{(a_{1},\ldots,  a_{8n+4}) \in  S_{8n+4} : a_{i}=8n+5-a_{8n+5-i}, 1 \le i \le 8n+4 \text{ and } ~m_{w}~ \text {is even}\},$
	where $m_{w}=\#\{ i\in \{1,\ldots, 4n+2\}: a_{i} > 4n+2\}.$ Thus, $w=(a_{1},\ldots, a_{8n+4})\in  W$ is known once $(a_{1}, \ldots, a_{4n+2})$ is known.

	\subsubsection{Root System and some other properties} Let $X(T_{H})$ (resp. $X(T)$) be the group of characters of $T_{H}$ (resp. $T$). For $1\le i\le 8n+4,$ the elements $\epsilon_{i}\in X(T_{H})$ are defined by 
	$$\epsilon_{i}(t)=t_i,$$ where $t=diag(t_1, t_2, \ldots, t_{8n+4}) \in T_{H}.$ 
	
	Then the set $R_{H}=\{\epsilon_{i}-\epsilon_j: i\neq j\}$ can be identified with the set of roots of $H$ with respect to $T_{H}$ and that $R_{H}^{+}=\{\epsilon_{i}-\epsilon_{j}: 1\le i<j\le 8n+4\}$ is the set of positive roots with respect to $B_{H}.$ For $1\le i\le 8n+3,$ we denote $\beta_i=\epsilon_{i}-\epsilon_{i+1}.$ Then $S_{H}=\{\beta_{i}: 1\le i\le 8n+3\}$ is the set of simple roots of $R_{H}.$ 
	
	Note that $\sigma$ induces an involution 
	$\sigma: X(T_{H})\longrightarrow X(T_{H}),$ on $X(T_{H})$ defined by $\chi\mapsto\sigma(\chi),$ where $\sigma(\chi)(t)=\chi(\sigma(t))$ for $t \in T_{H}.$ 
	Then we have $\sigma(\epsilon_{i})=-\epsilon_{8n+5-i}$ for $1\le i\le 8n+4.$ Since $T\subset T_{H},$ there is a surjective map $\varphi: X(T_{H})\longrightarrow X(T),$ defined by $\varphi(\epsilon_{i})=-\varphi(\epsilon_{8n+5-i})$ for $1\le i\le 8n+4.$

	Then $\sigma$ leaves $R_{H}$ (resp. $R_{H}^{+}$) stable. Let $R$ (resp. $R^{+}$) be the set of roots (resp. positive roots) of $G$ with respect to $T$ (resp. $B$). Then from the explicit nature of the adjoint representation of $G$ on $\mathfrak{g}$ (Lie algebra of $G$) it follows that $R=\varphi(R_{H}\setminus R_{H}^{\sigma})$ and $R^{+}=\varphi(R_{H}^{+}\setminus {R_{H}^{+}}^{\sigma}).$ In particular, we have  $R$ (resp. $R^{+}$) can be  identified with the orbit space of $R_{H}$ (resp. $R_{H}^{+}$) under the action of $\sigma$ minus the fixed point set under $\sigma.$ We see now that $R^{+}$ can be identified with the subset $\{\epsilon_{i}-\epsilon_j: 1\le i<j\le 4n+2\}\cup \{\epsilon_{i} + \epsilon_{j}:1\le i<j\le 4n+2\}$ of $X(H)$ through $\varphi.$ For $1\le i\le 4n+1,$ we denote $\alpha_{i}=\epsilon_{i}-\epsilon_{i+1}$ and $\alpha_{4n+2}=\epsilon_{4n+1}+\epsilon_{4n+2}.$ Then $S=\{\alpha_{i}: 1\le i\le 4n+2\}$ is the set of simple roots of $R.$
	
    Let $Q$ (resp. $P$) denote the maximal parabolic subgroup of $H$ (resp. $G$) corresponding to the simple root $\beta_{4n+2}$ (resp. $\alpha_{4n+2}$). Let $W_{Q}$ (resp. $W_{P}$) be the Weyl group of the $Q$ (resp. $P$). Then the Schubert varieties in $H/Q$ (resp. $G/P$) are parameterized by the minimal coset representatives of $W_{H}/W_{Q}$ (resp. $W/W_{P}$). We denote the set of all minimal coset representatives of $W_{H}/W_{Q}$ (resp. $W/W_{P}$) by $W^{Q}$ (resp. $W^{P}$).
	
	Let $\varpi'_{4n+2}$ (resp. $\varpi_{4n+2}$) be the fundamental weight corresponding to the simple root $\beta_{4n+2}$ (resp. $\alpha_{4n+2}$) in $H$ (resp. in $G$). Note that the very ample generator of the Picard group of $H/Q$ (resp. of $G/P$) is the line bundle $\mathcal{L}(\varpi'_{4n+2})$ (resp. $\mathcal{L}(\varpi_{4n+2})$). Further, the restriction of the very ample line bundle $\mathcal{L}(\varpi'_{4n+2})$ via the inclusion $G/P\subset H/Q$ is the very ample line bundle $\mathcal{L}(2\varpi_{4n+2}).$
	
	Note that the Bruhat-Chevalley order $~``\le"~$ on $W^{P}$ is induced by the Bruhat-Chevalley order $~``\le"~$ on $W^{Q}.$ Define $I_{4n+2,8n+4}:=\{\underline{i}=(i_{1},\ldots, i_{4n+2}):1 \le i_1 < \cdots < i_{4n+2} \le 8n+4 \}.$ Note that there is a natural partial order $``\le "$ on $I_{4n+2,8n+4}$ defined by $\underline{i}\le \underline{j}$ if and only if  $i_{t}\le j_{t}$ for all $1\le t\le 4n+2.$ Then there is a natural order preserving isomorphism between $(W^{Q}, \le)$ and $(I_{4n+2,8n+4},\le).$ Let $\underline{i}:=(i_{1},...,i_{4n+2})\in I_{4n+2,8n+4},$ and $p_{\underline{i}}$ be the associated Pl\"ucker co-ordinate on $G_{4n+2,8n+4}(\simeq H/Q).$ Let $f_{\underline{i}}$ denote the restriction of $p_{\underline{i}}$ to the set $M_{4n+2}$ of matrices of size $4n+2\times 4n+2$ identified with the opposite big cell $O^{-}_{H}$ of $H/Q.$ Then $f_{\underline{i}}(Y)=p({\underline{i}}(A), {\underline{i}}(B)),$ where ${\underline{i}}(A), {\underline{i}}(B)$ are given as follows: Let $r$ be such that $i_{r}\le 4n+2, i_{r+1}>4n+2.$ Let $s=4n+2-r.$ Then $\underline{i}(A)$ is the $s$-tuple given by $(8n+5-i_{4n+2},...,8n+5-i_{r+1}),$ while $\underline{i}(B)$ is the $s$-tuple given by the complement of $(i_{1},...,i_{r})$ in $(1,...,4n+2).$ Then the pair $(\underline{i}(A), \underline{i}(B))$ is the canonical dual pair associated to $\underline{i}.$
	
	Note that $\mathcal{L}(\varpi'_{4n+2})$ is the tautological line bundle on $H/Q$($\simeq G_{4n+2, 8n+4},$ the Grassmannian of $4n+2$-dimensional subspaces of $V$). Then $H^0(H/Q, \mathcal{L}(\varpi'_{4n+2}))$ is the dual of the irreducible $H$-module with highest weight $\varpi'_{4n+2}.$ Let $\mathcal{L}$ be the restriction of $\mathcal{L}(\varpi'_{4n+2})$ to $G/P.$ For $\underline{i}\in I_{4n+2,8n+4},$ let $p'_{\underline{i}}$ be the restriction of $p_{\underline{i}}$ to $G/P.$ Note that $p'_{\underline{i}}\in H^0(G/P, \mathcal{L}).$ Let $p(\underline{i}(A), \underline{i}(B))$ be the restriction of $p_{\underline{i}}$ to $M_{4n+2}$  with the above identification. Let $p'(\underline{i}(A), \underline{i}(B))$ be the restriction of $p(\underline{i}(A), \underline{i}(B))$ to the set $SkM_{4n+2}$ of skew-symmetric matrices of size $4n+2\times 4n+2$ (identified with the opposite big cell $O^{-}$ of $G/P$). Let $Y\in SkM_{4n+2}.$ Consider $\underline{i}\in I_{4n+2,8n+4}$ such that $\underline{i}(A)=\underline{i}(B).$ Then $p'(\underline{i}(A), \underline{i}(A))(Y)$ is a principal minor of $Y.$ Hence, $p'(\underline{i}(A), \underline{i}(A))(Y)$ is a square denoting $q_{\underline{i}}(Y)$ the corresponding Pfaffian (i.e., $q_{\underline{i}}^{2}(Y)=p'(\underline{i}(A), \underline{i}(A))(Y)$). Thus we obtain a regular function 
	$q_{\underline{i}}: SkM_{4n+2}\longrightarrow \mathbb{C}.$ Let $r=\#\underline{i}(A).$ Then we have that $q_{\underline{i}}$ is non zero if only if $r$ is even (since the determinant of a skew-symmetric $r\times r$ matrix is zero, if $r$ is odd). Further, we have $\mathcal{L}^{2}=\mathcal{L}(\varpi'_{4n+2}).$ 
	
	\subsection{Descends to a line bundle}
	Now, we recall the following theorem due to Shrawan Kumar. 
	\begin{theorem}[\cite{Kum}, Theorem 3.10, p.764]\label{thm2.1}
		Let $G=Spin(8n+4, \mathbb{C}).$ Then the line bundle $\mathcal{L}({\lambda})$ descends to a line bundle on the GIT quotient $T\backslash\backslash(G/P)_{T}^{ss}(\mathcal{L}({\lambda}))$ if and only if $\lambda=4m\varpi_{4n+2}$ for some positive integer $m.$ 
	\end{theorem}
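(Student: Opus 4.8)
The plan is to derive this descent criterion from \emph{Kempf's descent lemma}: a $T$-linearized line bundle on a projective $T$-variety descends to its GIT quotient exactly when, for every semistable point $x$ whose $T$-orbit is closed in the semistable locus, the isotropy group $T_x$ acts trivially on the fiber over $x$. Since $\mathrm{Pic}(G/P)=\mathbb{Z}\cdot\mathcal{L}(\varpi_{4n+2})$, any line bundle in question is $\mathcal{L}(\lambda)$ with $\lambda=m'\varpi_{4n+2}$, and because semistability is unchanged under passing to a positive tensor power, the semistable locus $(G/P)^{ss}_{T}(\mathcal{L}(\lambda))$, and hence the collection of closed orbits and their stabilizers, is independent of $m'$. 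Thus the statement reduces to: the character $m'\varpi_{4n+2}$ is trivial on the stabilizer of every closed orbit in the (fixed) semistable locus if and only if $4\mid m'$. I would therefore (a) enumerate the closed orbits and their stabilizers, and (b) evaluate $m'\varpi_{4n+2}$ on the finite parts of these stabilizers, the positive-dimensional parts imposing no divisibility condition since $\varpi_{4n+2}$ is automatically trivial on them (a descending power exists by hypothesis).

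First I would treat the generic closed orbit. The kernel of the $T$-action on $G/P$ is the normal core of $P$ in $G$, which for the simple group $G$ is its center $Z(G)$; for $G=Spin(8n+4)$ (type $D_{4n+2}$ with $4n+2$ even) one has $Z(G)\cong\mathbb{Z}/2\times\mathbb{Z}/2$. In fact the generic stabilizer equals $Z(G)$: on the opposite big cell $T$ acts with weights $\{\epsilon_i+\epsilon_j:1\le i<j\le 4n+2\}$, and a direct computation shows $\{t\in T:(\epsilon_i+\epsilon_j)(t)=1\ \forall\, i<j\}=\{t:\epsilon_i(t)=\pm 1\ \text{independent of}\ i\}$, a group of order $4$ equal to $Z(G)$. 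Using $Z(G)\cong\mathrm{Hom}(\Lambda_w/\Lambda_r,\mathbb{C}^\times)$ (weight lattice over root lattice) together with the fact that the half-spin weight $\varpi_{4n+2}$ has order $2$ in $\Lambda_w/\Lambda_r$, the central character of $m'\varpi_{4n+2}$ is trivial on $Z(G)$ precisely when $2\mid m'$. As $Z(G)\subseteq T_x$ for every $x$, this already gives the necessity of $2\mid m'$.

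The extra factor of $2$ is the crux, and it must come from a non-generic closed orbit whose finite stabilizer strictly contains $Z(G)$. I would locate it among the $T$-stable curves of $(G/P)^{ss}_{T}(\mathcal{L}(\lambda))$: for a $T$-stable $\mathbb{P}^1$ joining fixed points $e_w$ and $e_{s_\beta w}$ that lies inside the semistable locus, the generic point has stabilizer $\ker(\beta|_T)$, whose component group contributes a $\mathbb{Z}/2$ beyond $Z(G)$, while the isotropy character on the fiber is governed by the two endpoint weights $-w\lambda$ and $-(s_\beta w)\lambda$, whose difference is an integer multiple of $\beta$. Evaluating $m'\varpi_{4n+2}$ on this extra involution, and using the relation $\mathcal{L}^2=\mathcal{L}(\varpi'_{4n+2})$ between the Pfaffian bundle $\mathcal{L}=\mathcal{L}(\varpi_{4n+2})$ and the Pl\"ucker bundle pulled back from $H/Q\simeq G_{4n+2,8n+4}$, I expect the triviality condition to force $2\mid(m'/2)$, i.e. the remaining factor of $2$. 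Conversely, when $\lambda=4m\varpi_{4n+2}$ the central character is trivial and all these endpoint/involution characters vanish, so Kempf's criterion holds at every closed orbit and $\mathcal{L}(\lambda)$ descends.

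The main obstacle I anticipate is step (a) for the non-generic orbits: pinning down explicitly which $T$-stable curve (equivalently, which pair $e_w,e_{s_\beta w}$ of fixed points and which root $\beta$) lies in the semistable locus and carries the extra involutive stabilizer, and then computing its precise contribution. This requires a careful analysis of the $T$-fixed points $e_w$ ($w\in W^P$), of the $T$-stable curves joining them inside $OG(4n+2,8n+4)$, and of how $\varpi_{4n+2}$ pairs with the corresponding cocharacters; the parity features special to $D_{4n+2}$ with $4n+2=2(2n+1)$ are precisely what make the divisibility come out to be $4$ rather than $2$. Once this isotropy datum is identified, assembling the two $2$-divisibility constraints into $4\mid m'$ is routine.
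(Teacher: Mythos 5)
The paper itself contains no proof of this statement: it is quoted directly from Kumar's paper (\cite{Kum}, Theorem 3.10), so the only meaningful comparison is with Kumar's argument. Your overall frame does match that argument --- Kempf's descent criterion, reduction to triviality of the isotropy character at semistable points with closed $T$-orbit, and the observation that connected parts of stabilizers impose no condition --- and your computation of the generic stabilizer and of the central obstruction is correct: the generic isotropy is $Z(G)\cong\mathbb{Z}/2\times\mathbb{Z}/2$, the weight $\varpi_{4n+2}$ has order $2$ in the weight lattice modulo the root lattice, and this yields exactly the necessity of $2\mid m'$.

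However, your proposed source of the remaining factor of $2$ is a genuine gap, and the mechanism you name would in fact fail. For $G=Spin(8n+4)$ every root $\beta=\epsilon_i\pm\epsilon_j$ is \emph{primitive} in $X(T)$ (its half has two coordinates $\pm 1/2$ and the rest $0$, which is not a weight of $Spin$), so $\ker(\beta|_T)$ is \emph{connected}; moreover the generic stabilizer of a $T$-stable curve is precisely this kernel, since the open orbit is $T/\ker(\chi)$ for the root character $\chi$ acting on the curve. A connected stabilizer of a semistable point always acts trivially on the fiber (a $T$-invariant section of $\mathcal{L}^{\otimes m}$ nonvanishing at $x$ makes the isotropy character $m$-torsion, hence trivial on a connected group), so $T$-stable curves impose no descent condition at all --- not even the central one --- and there is no ``extra involution'' in their isotropy. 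The divisibility by $4$ instead comes from semistable points with \emph{finite but non-central} stabilizer. Concretely, in the big-cell model by skew-symmetric $(4n+2)\times(4n+2)$ matrices, take $Y$ whose support graph is a union of two disjoint odd cycles partitioning $\{1,\ldots,4n+2\}$ (possible since $4n+2=(2n+1)+(2n+1)$): the equations $t_it_j=1$ along an odd cycle force $t_i=\pm 1$ constant on that cycle, so the stabilizer in the $SO$-torus is $\{\pm 1\}^2$; one checks $0$ lies in the convex hull of the weights of the nonvanishing Pfaffian coordinates of $Y$, so $Y$ is semistable with closed orbit, and evaluating $m'\varpi_{4n+2}=(m'/2)(\epsilon_1+\cdots+\epsilon_{4n+2})$ on the element equal to $+1$ on one cycle and $-1$ on the other gives $(-1)^{m'/2}$, forcing $4\mid m'$. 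This stabilizer analysis (carried out in \cite{Kum} via torsion elements of $T$ and their centralizers) is exactly the crux your proposal leaves open, as you yourself acknowledge in your final paragraph; and the route you sketch toward it would not close it.
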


	\subsection{Young tableau for $Spin(8n+4,\mathbb{C})$}Now, we present here the simplified version of the definition of Young tableau and standard Young tableau for $Spin(8n+4,\mathbb{C})$ associated to the weight $\lambda=2m\varpi_{4n+2},$ where $m$ is a positive integer (for more general see \cite[Appendix p.363-365]{L}).
	 	
	\subsubsection{Young tableau for $\lambda=2m\varpi_{4n+2},$ where $m \in \mathbb{N}$}\label{subsection2.1} Define $p_i = 2m$ for $1 \leq i \leq 4n+2.$ Associated to $\lambda,$ we define a partition $p(\lambda):=(p_{1},p_{2},\ldots,p_{4n+2}).$ Then a Young diagram of shape $p(\lambda) = (p_1, \ldots, p_{4n+2})$ associated to $\lambda$ is denoted by $\Gamma$ consists of $p_1$ boxes in the first column, $p_2$ boxes  in the second column and so on.
	
	A Young tableau of shape $\lambda$ is a Young diagram filled with integers between $1$ and $8n+4.$ 
	
	A Young tableau is said to be standard if the entries along any column is non-decreasing from top to bottom and along any row is strictly increasing from left to right.
	
	A Young tableau $\Gamma$ of shape $p(\lambda)$ is said to be $Spin(8n+4,\mathbb{C})$-standard if $\Gamma$ is standard and if $r= (i_{1},i_{2},\ldots,i_{4n+2})$ is a row of length $4n+2$ such that if $i_{j}$ is an entry of the row, then $8n+5-i_{j}$ is not an entry of this row. The number of integers greater than $4n+2$ in a row of $\Gamma$ is even. 
	
	Given a $Spin(8n+4,\mathbb{C})$-standard Young tableau $\Gamma$ of shape $\lambda,$ we associate a section $p_{\Gamma}\in H^0(G/P, \mathcal{L}^{\otimes m}(2\varpi_{4n+2}))^{T}$
	which is called standard monomial of degree $m.$ It is well known that standard monomials of shape $\lambda$ forms a basis of $H^0(G/P, \mathcal{L}^{\otimes m}(2\varpi_{4n+2}))^{T}$

	\begin{remark}[\cite{NayekPattanayak} Lemma 3.2, p.5] \label{zeroweight}
		Fix an integer $1\le t\le 8n+4.$ Let $c_{\Gamma}(t)$ denote the number of times $t$ appears in $\Gamma.$ 
		A monomial $p_{\Gamma}\in H^0(G/P, \mathcal{L}(\lambda))$ is $T$-invariant if and only if $c_{\Gamma}(t)=c_{\Gamma}(8n+5-t)$ for all $1 \leq t \leq 8n+4.$
	\end{remark}
	\subsection{Properties of Pfaffian.}
	To proceed further we recall some result on Pfaffian of a skew-symmetric matrix from \cite{DW} which we will use to determine the straightening law.
	
	The Pfaffian Pf$(A)$ of a skew-symmetric $4n+2\times 4n+2$-matrix	$A=(a_{ij})$ (i.e., $a_{ii}=0$ and $a_{ij}=-a_{ji}$ for $i \neq j$) with coefficients in $\mathbb{C}$ is defined to be Pf$(A)^{2}$=det$(A).$ One can describe the Pfaffian equivalently as a polynomial 
	\begin{center}
		Pf$(X_{12},X_{13},\ldots, X_{14n+2}, X_{23},\ldots, X_{24n+2},\ldots, X_{4n+14n+2})\in \mathbb{Z}[X_{12}, X_{13},\ldots, X_{4n+14n+2}]$
	\end{center} such that for each $A=(a_{ij})\in SkM_{4n+2}$ one has 
	
	\begin{center}
		Pf$(a_{12},a_{13},\ldots, a_{14n+2}, a_{23},\ldots, a_{24n+2},\ldots, a_{4n+14n+2})^{2}$=det($A$),
	\end{center}
	normalized such that 
	\begin{equation}
		\text{Pf}(a_{12},a_{13},\ldots, a_{14n+2}, a_{23},\ldots, a_{24n+2},\ldots, a_{4n+14n+2}):=1
	\end{equation}
	for $a_{ij}:=\bigg\{ \begin{matrix}  1 & \text{~for~} j-i=2n+1\\ 0 &\text{~otherwise.}
	\end{matrix}$
	
	To be more explicit, consider as above a skew-symmetric $4n+2\times 4n+2$-matrix $A$ and for each subset $I\subseteq \{1,2, \ldots, 4n+2\}$ of cardinality, say, $m.$ Let $A(I)$ denote the skew-symmetric  $m\times m$-matrix  one gets from $A$ by eliminating all rows and columns not indexed by indices from $I.$ Moreover, put 
	\begin{equation}
		P(I):=\text{Pf}(A(I))
	\end{equation} 
	for every $I\subseteq \{1,2,\ldots,4n+2\},$ where as usual Pf($A(\phi)$):=1. Then the following result holds.
	
	\begin{theorem}[\cite{DW}, Theorem 1, p.122]\label{thm2.3}
		For any two subsets $I_{1}, I_{2}\subseteq \{1,2,\ldots, 4n+2\}$ of odd cardinality and elements $i_{1},i_{2},\ldots,i_{t}\in \{1,2,\ldots,4n+2 \}$ with $i_{1}<i_{2}<\cdots < i_{t}$ and $\{i_{1},\ldots, i_{t}\}=I_{1}\Delta I_{2}:=(I_{1}\setminus I_{2})\cup (I_{2}\setminus I_{1})$ one has \begin{equation}
			\sum_{\tau=1}^{t} (-1)^\tau\cdot P({I_{1}\Delta\{i_{\tau}\}})\cdot P({I_{2}\Delta\{i_{\tau}\}})=0.
		\end{equation}
	\end{theorem}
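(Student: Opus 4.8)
\section*{Proof proposal}

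The plan is to give a self-contained combinatorial proof via a sign-reversing involution on perfect matchings, the Pfaffian analogue of the standard proof of the Grassmann--Pl\"ucker relations. Recall that for a subset $I$ of even cardinality $2k$ one has the expansion
\[
P(I)=\mathrm{Pf}(A(I))=\sum_{M}\mathrm{sgn}(M)\prod_{\{a,b\}\in M}a_{ab},
\]
the sum being over all perfect matchings $M$ of $I$ (partitions of $I$ into unordered pairs), where $\mathrm{sgn}(M)$ is the sign of the associated permutation. Expanding each factor this way, the left-hand side of the asserted identity becomes a sum over indices $\tau$ and pairs $(M_{1},M_{2})$, where $M_{1}$ is a perfect matching of $I_{1}\Delta\{i_{\tau}\}$ and $M_{2}$ a perfect matching of $I_{2}\Delta\{i_{\tau}\}$, each term carrying the sign $(-1)^{\tau}\,\mathrm{sgn}(M_{1})\,\mathrm{sgn}(M_{2})$ and the monomial $\prod_{\{a,b\}\in M_{1}\cup M_{2}}a_{ab}$. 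It suffices to produce a fixed-point-free, monomial-preserving, sign-reversing involution on the indexing set of these terms; its existence forces the whole sum to vanish.

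First I would analyze the superposition $M_{1}\cup M_{2}$ as a graph on $(I_{1}\Delta\{i_{\tau}\})\cup(I_{2}\Delta\{i_{\tau}\})$. Since $(I_{1}\Delta\{i_{\tau}\})\,\Delta\,(I_{2}\Delta\{i_{\tau}\})=I_{1}\Delta I_{2}=\{i_{1},\ldots,i_{t}\}$, every vertex outside this set lies in both matchings and so has degree two, while each vertex of $\{i_{1},\ldots,i_{t}\}$ lies in exactly one matching and has degree one; in particular $i_{\tau}$, lying in precisely one of the two modified sets, always has degree one. Hence $M_{1}\cup M_{2}$ decomposes into alternating cycles together with $t/2$ alternating paths whose endpoints are exactly $i_{1},\ldots,i_{t}$ (here $t$ is even, as $|I_{1}|,|I_{2}|$ are odd), so $i_{\tau}$ is the endpoint of a unique path, whose other endpoint is some $i_{\sigma}$ with $\sigma\neq\tau$. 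The involution swaps the labels $M_{1}\leftrightarrow M_{2}$ along this path, leaving all other edges, all cycles, and all other paths untouched. Using $(I_{1}\Delta\{i_{\tau}\})\,\Delta\,\{i_{\tau},i_{\sigma}\}=I_{1}\Delta\{i_{\sigma}\}$ and the analogue for $I_{2}$, one checks that the flipped edge set yields matchings $M_{1}'$ of $I_{1}\Delta\{i_{\sigma}\}$ and $M_{2}'$ of $I_{2}\Delta\{i_{\sigma}\}$, so $(\sigma,M_{1}',M_{2}')$ is again a legitimate term. Because $M_{1}'\cup M_{2}'=M_{1}\cup M_{2}$ as edge sets the monomial is preserved, and re-applying the construction recovers the same path, so the map is an involution with no fixed points.

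The main obstacle, and the only place where genuine care is needed, is the sign computation: one must show that the flip reverses the total sign, i.e. that $\mathrm{sgn}(M_{1}')\,\mathrm{sgn}(M_{2}')=(-1)^{\tau+\sigma+1}\,\mathrm{sgn}(M_{1})\,\mathrm{sgn}(M_{2})$, so that the factors $(-1)^{\tau}$ and $(-1)^{\sigma}$ combine with the matching-sign ratio to produce cancellation. Here $\mathrm{sgn}(M_{1})\,\mathrm{sgn}(M_{2})$ changes by a factor determined by reordering the vertices along the flipped path, and this factor must be matched precisely against the parity difference coming from the positions of $i_{\tau}$ and $i_{\sigma}$ inside $I_{1}\Delta I_{2}$. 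The cleanest way to organize this bookkeeping is to fix the linear order $1<2<\cdots<4n+2$, express each $\mathrm{sgn}(M)$ through a canonical form of the matching, and track how sliding $i_{\tau}$ and $i_{\sigma}$ past the intervening path vertices accumulates exactly the required sign. Once this single identity is verified the theorem follows immediately, since the terms cancel in pairs. As an alternative that avoids matchings, one could encode the Pfaffians through the bivector $\omega=\sum_{i<j}a_{ij}\,e_{i}\wedge e_{j}$ in the exterior algebra $\bigwedge(\mathbb{C}^{4n+2})$, where $\tfrac{1}{k!}\,\omega^{\wedge k}=\sum_{|I|=2k}P(I)\,e_{I}$, and derive the relation by contracting a suitable product of the exponentials $e^{\omega}$ against the covectors dual to $i_{1},\ldots,i_{t}$, the vanishing reflecting the nilpotency $e_{i_{\tau}}\wedge e_{i_{\tau}}=0$; but the resulting sign analysis is essentially the same as above.
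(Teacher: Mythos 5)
A preliminary remark: the paper offers no proof of \cref{thm2.3} at all --- the statement is imported verbatim from Dress--Wenzel \cite{DW} --- so there is no internal argument to compare yours against; your proposal has to stand on its own as a complete proof. Its overall strategy (expand both Pfaffians over perfect matchings and cancel terms in pairs via a path-flipping, sign-reversing involution on the superposition $M_{1}\cup M_{2}$) is the standard combinatorial route to Pfaffian relations of Pl\"ucker type, and the structural part of your argument is correct: the degree-one vertices of the superposition are exactly the elements of $I_{1}\Delta I_{2}$, the internal vertices of the paths lie in both vertex sets and hence outside $I_{1}\Delta I_{2}$, the flip along the path ending at $i_{\tau}$ produces perfect matchings of $I_{1}\Delta\{i_{\sigma}\}$ and $I_{2}\Delta\{i_{\sigma}\}$, it preserves the monomial, and it is a fixed-point-free involution on the index set of terms.

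The genuine gap is that the one substantive step is asserted rather than proved: all of the cancellation rests on the sign identity $\mathrm{sgn}(M_{1}')\,\mathrm{sgn}(M_{2}')=(-1)^{\tau+\sigma+1}\,\mathrm{sgn}(M_{1})\,\mathrm{sgn}(M_{2})$, which you correctly isolate as the crux and then leave at the level of ``track how sliding $i_{\tau}$ and $i_{\sigma}$ past the intervening path vertices accumulates exactly the required sign.'' That is a hope, not an argument, and it points at the wrong quantity: the decisive count is not of path vertices but of the elements $i_{\rho}$ of $I_{1}\Delta I_{2}$ lying strictly between $i_{\tau}$ and $i_{\sigma}$ (contributions from vertices belonging to both vertex sets, in particular all internal path vertices, cancel modulo $2$). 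Here is how the step can actually be closed. Use the fact that $\mathrm{sgn}(M)=(-1)^{c(M)}$, where $c(M)$ is the number of crossing pairs of edges of $M$ relative to the linear order of the vertices. Since the flip preserves the edge multiset $M_{1}\sqcup M_{2}$, the change in $c(M_{1})+c(M_{2})$ equals, modulo $2$, the number of crossings between path edges and off-path edges. For a fixed off-path edge $e$, walking along the path shows that the number of path edges crossing $e$ is odd precisely when $e$ crosses the chord $\{i_{\tau},i_{\sigma}\}$, i.e.\ has exactly one endpoint strictly between $i_{\tau}$ and $i_{\sigma}$. Summing over off-path edges, the parity of this total equals the number of degree-one off-path vertices strictly between $i_{\tau}$ and $i_{\sigma}$ (degree-two vertices contribute evenly, path vertices not at all), and these are exactly the $i_{\rho}$ with $\rho$ strictly between $\tau$ and $\sigma$, of which there are $|\sigma-\tau|-1$. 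Hence $\mathrm{sgn}(M_{1}')\,\mathrm{sgn}(M_{2}')=(-1)^{|\sigma-\tau|-1}\,\mathrm{sgn}(M_{1})\,\mathrm{sgn}(M_{2})=(-1)^{\tau+\sigma+1}\,\mathrm{sgn}(M_{1})\,\mathrm{sgn}(M_{2})$, which is exactly what pairwise cancellation of the $(-1)^{\tau}$- and $(-1)^{\sigma}$-terms requires; with this lemma supplied, your involution argument becomes a complete proof. (Incidentally, your closing exterior-algebra alternative would not involve ``essentially the same sign analysis'': the whole advantage of that formulation is that anticommutativity performs this bookkeeping automatically.)
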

	
	\begin{remark}\label{rmk2.4}
		For $\underline{i}=(i_1, i_2, \ldots, i_{4n+2})\in I_{4n+2,8n+4},$ we have\\ $q_{\underline{i}}=P(\underline{i}(B))=\ytableausetup
		{boxsize=2.3em}\begin{ytableau}
			i_1 & i_2 & \cdots & i_{4n+2}
		\end{ytableau}.$    
	\end{remark}
	
	\begin{remark}\label{rmk2.5}
		Let $I=\{i_1, i_2, \ldots, i_r\} \subset \{1, 2, \ldots, 4n+2\}$ be such that $i_1<i_2<\cdots <i_r.$ Let $\{j_{1},\ldots, j_{4n+2-r}\}$ be the complements of $\{i_{1},\ldots, i_{r}\}$ in $\{1,2,\ldots, 4n+2\}$ such that $j_1<j_2<\ldots <j_{4n+2-r}.$ Then we have $P(I)=q_{\underline{i}},$ where $\underline{i}=(j_1,j_2,\ldots, j_{4n+2-r}, 8n+5-i_r, \ldots, 8n+5-i_1).$ 
	\end{remark}

	\section{Main Theorem}\label{section3}	
	Recall that by \cite[Corollary 1.9, p.85]{KS} there exists a unique minimal $v_1 \in W^{P}$ such that $v_1(\lambda)\leq 0$. An easy computation shows that $$v_1=s_{4n+2}\prod_{i=2n}^{1}\tau_{2i-1},$$ where $\tau_{2i-1}=s_{2i-1}s_{2i}\cdots s_{4n-1}s_{4n}s_{4n+1}$ for $i$ even and $\tau_{2i-1}=s_{2i-1}s_{2i}\cdots s_{4n-1}s_{4n}s_{4n+2}$ for $i$ odd. In one line notation $v_i$'s $(1 \leq i \leq 6)$ are the following:  
	\begin{itemize}
		\item  $v_1~~=~(2,4,6,\ldots,4n-4,4n-2,4n,4n+3,4n+4,4n+6,4n+8,\ldots,~8n+4)$
		\item $v_2~~=~(2,4,6,\ldots,4n-4,4n-1,4n,4n+3,4n+4,4n+7,4n+8,\ldots,~8n+4)$
		\item $v_3~~=~(2,4,6,\ldots,4n-4,4n-2,4n+1,4n+3,4n+5,4n+6,4n+8,\ldots,~8n+4)$
		\item $v_4~~=~(2,4,6,\ldots,4n-4,4n-1,4n+1,4n+3,4n+5,4n+7,4n+8,\ldots,~8n+4)$
		\item $v_{5}=(2,4,6,\ldots,4n-4,4n-2,4n+2,4n+4,4n+5,4n+6,4n+8,\ldots,~8n+4)$
		\item $v_{6}=(2,4,6,\ldots,4n-4,4n-1,4n+2,4n+4,4n+5,4n+7,4n+8,\ldots,~8n+4).$
	\end{itemize}

	Since $v_1 \leq v_{i}$ and $v_{1}(\lambda)\leq 0,$ we have $v_{i}(\lambda) \leq 0$ for all $2 \leq i \leq 6.$ Thus, by \cite[Lemma 2.1, p.470]{KP} $X(v_i)^{ss}_T(\mathcal{L}(2\lambda))$ is non-empty for all $1 \leq i \leq 6.$

	Let $X= T \backslash \backslash (X(v_{6}))^{ss}_T(\mathcal{L}(2\lambda)).$ Let $R=\oplus_{k \in \mathbb{Z}_{\geq 0}}R_k,$ where $R_{k}=H^0(X(v_{6}), \mathcal{L}^{\otimes k}(2\lambda))^{T}.$  Note that $R_{k}$'s  are finite dimensional vector spaces, and $X= {\rm Proj}(R).$
	
	Let $p_{\Gamma}$ be the standard monomial in $R_k$ associated to a $Spin(8n+4, \mathbb{C})$-standard Young tableau $\Gamma.$ Thus, $\Gamma$ is of the shape $(2k, 2k, \ldots, 2k)$ ($4n+2$ times). Further, the entries along any row of $\Gamma$ are strictly increasing from left to right and entries along any column of $\Gamma$ are non-decreasing from top to bottom (see \cref{subsection2.1}). Since $p_{\Gamma}$ is $T$-invariant, by \cref{zeroweight} we have $ c_\Gamma(t)=c_\Gamma(8n+5-t) \text{ for all } 1 \leq t \leq 8n+4.$ Thus, we have \begin{equation}\label{eq12.2}
		c_{\Gamma}(t)=k 
	\end{equation} 
	for all $1 \leq t \leq 8n+4.$

	Let $\underline{i} \in I_{4n+2,8n+4}$ be such that both $i_{t}$ and $8n+4-i_{t}$ simultaneously does not appear in $\underline{i}$ and the number of integers greater than or equal to $4n+3$ are even. Note that such $\underline{i}$ is uniquely determined by the tuple $(i_1, i_2, \ldots, i_r)$ of increasing sequence of integers less than or equal to $4n+2$ appear in $\underline{i},$ where $r$ is even. We denote the Young tableau of shape $k\lambda$ ($k\ge 1$) with first row $\underline{i}_{1},$ second row $\underline{i}_{2},$ and so on the $k$-th row $\underline{i}_{k}$ by $\text{YT}^{^{k\lambda}}_{_{(i_1(1),i_1(2), \ldots, i_1(r_{1}))~(i_2(1),i_2(2), \ldots, i_2(r_{2}))~\cdots ~(i_k(1),i_k(2), \ldots, i_k(r_{k}))}},$ where the tuple $(i_j(1),i_j(2), \ldots, i_j(r_{j}))$ denotes the increasing sequence of integers less than or equal to $4n+2$ appear in $\underline{i}_{j}.$ 
	
	Consider the following tableaux
	\begin{center}
		$X_1=\text{YT}^{^{2\lambda}}_{_{(1,3,\ldots,4n-3,4n-1,4n+1,4n+2)~(2,4,\ldots,4n-4,4n-2,4n)}}$\\
		$ X_2=\text{YT}^{^{2\lambda}}_{_{(1,3,\ldots,4n-3,4n-2,4n+1,4n+2)~(2,4,\ldots,4n-4,4n-1,4n)}}$\\
		$X_3=\text{YT}^{^{2\lambda}}_{_{(1,3,\ldots,4n-3,4n-1,4n,4n+2)~(2,4,\ldots,4n-4,4n-2,4n+1)}}$\\
		$X_4=\text{YT}^{^{2\lambda}}_{_{(1,3,\ldots,4n-3,4n-2,4n,4n+2)~(2,4,\ldots,4n-4,4n-1,4n+1)}}$\\
		$X_5=\text{YT}^{^{2\lambda}}_{_{(1,3,\ldots,4n-3,4n-1,4n,4n+1)~(2,4,\ldots,4n-4,4n-2,4n+2)}}$\\
		$X_6=\text{YT}^{^{2\lambda}}_{_{(1,3,\ldots,4n-3,4n-2,4n,4n+1)~(2,4,\ldots,4n-4,4n-1,4n+2)}}$\\
		$Y_1=\text{YT}^{^{4\lambda}}_{_{(1,3,\ldots,4n-3,4n-2,4n-1,4n+2)~(1,3,\ldots,4n-3,4n,4n+1,4n+2)~(2,4,\ldots,4n-4,4n-2,4n)~(2,4,\ldots,4n-4,4n-1,4n+1)}}$\\
		$Y_2=\text{YT}^{^{4\lambda}}_{_{(1,3,\ldots,4n-3,4n-2,4n-1,4n+1)~(1,3,\ldots,4n-3,4n,4n+1,4n+2)~(2,4,\ldots,4n-4,4n-2,4n)~(2,4,\ldots,4n-4,4n-1,4n+2)}}$\\
		$Y_3=\text{YT}^{^{4\lambda}}_{_{(1,3,\ldots,4n-3,4n-2,4n-1,4n,4n+1,4n+2)~(1,3,\ldots,4n-3,4n)~(2,4,\ldots,4n-4,4n-2,4n+1)~(2,4,\ldots,4n-4,4n-1,4n+2)}}$\\
		$Y_4=\text{YT}^{^{4\lambda}}_{_{(1,3,\ldots,4n-3,4n-2,4n-1,4n)~(1,3,\ldots,4n-3,4n,4n+1,4n+2)~(2,4,\ldots,4n-4,4n-2,4n+1)~(2,4,\ldots,4n-4,4n-1,4n+2)}}$\\
		$Z_1=\text{YT}^{^{4\lambda}}_{_{(1,3,\ldots,4n-3,4n-2,4n-1,4n)~(1,3,\ldots,4n-3,4n-2,4n+1,4n+2)~(1,3,\ldots,4n-3,4n,4n+1,4n+2)~(2,4,\ldots,4n-4,4n-2,4n)}}$\\
		$\text{YT}^{^{2\lambda}}_{_{(2,4,\ldots,4n-4,4n-1,4n+1)~(2,4,\ldots,4n-4,4n-1,4n+2)}}$\\
		$ Z_2=\text{YT}^{^{4\lambda}}_{_{(1,3,\ldots,4n-3,4n-2,4n-1,4n,4n+1,4n+2)~(1,\ldots,4n-3,4n-2,4n+1,4n+2)~(1,3,\ldots,4n-3,4n)~(2,4,\ldots,4n-4,4n-2,4n)}}$\\
		$\text{YT}^{^{2\lambda}}_{_{(2,4,\ldots,4n-4,4n-1,4n+1)~(2,4,\ldots,4n-4,4n-1,4n+2)}}.$
	\end{center}
	
	\normalsize{\begin{lemma}\label{lem5.1}
			The homogeneous coordinate ring $R$ of $X$ is generated by $X_i,$  $Y_j,$ $Z_1$ and  $Z_2.$
	\end{lemma}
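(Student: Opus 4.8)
The plan is to use Standard Monomial Theory to reduce the statement to a finite low-degree computation together with one uniform inductive step. By the discussion in \cref{subsection2.1} and by \cref{zeroweight}, a basis of $R_k$ is given by the $Spin(8n+4,\mathbb{C})$-standard Young tableaux $\Gamma$ of shape $(2k,\ldots,2k)$ whose rows all correspond to Pfaffian coordinates $q_{\underline{i}}$ with $\underline{i}\le v_6$, and which are balanced in the sense that $c_\Gamma(t)=k$ for every $1\le t\le 8n+4$. Each such row is determined by its entries $\le 4n+2$ (\cref{rmk2.4}, \cref{rmk2.5}), and the Bruhat bound $\underline{i}\le v_6$ together with $Spin$-standardness severely restricts the admissible rows. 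First I would list, once and for all, the finitely many rows $\underline{i}\le v_6$ that can occur; this turns the whole problem into bookkeeping over a small explicit poset of rows.

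Second, I would settle the base cases $k=1,2,3$ by direct enumeration. For $k=1$ a balanced degree-one tableau is a partition of $\{1,\ldots,8n+4\}$ into two mutually dual rows, both $\le v_6$, and I expect exactly six of these, so that $R_1=\operatorname{span}(X_1,\ldots,X_6)$. For $k=2$ I would straighten every product $X_iX_j$ using the Pfaffian relations of \cref{thm2.3} and compare the resulting standard monomials against the full balanced standard basis of $R_2$; the basis elements not reached this way should be precisely (the span of) $Y_1,\ldots,Y_4$. The analogous computation one degree higher should identify $Z_1,Z_2$ as the degree-three standard monomials not lying in the subalgebra generated by the $X_i$ and $Y_j$. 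This step is purely finite but laborious, and it is where the explicit shapes recorded in the statement get pinned down.

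The heart of the argument is the inductive step: for $k\ge 4$ I would prove $R_k=R_1\cdot R_{k-1}$, so that every $R_k$ lies in the subalgebra generated by $R_1,R_2,R_3$ and hence, by the base cases, by the $X_i,Y_j,Z_1,Z_2$. Concretely, given a balanced $Spin$-standard tableau $\Gamma$ of shape $(2k,\ldots,2k)$ with rows $\le v_6$, the aim is to extract a pair of rows forming one of the $X_\ell$ (a balanced, mutually dual, standard pair) in such a way that the remaining $2k-2$ rows still form a balanced standard tableau $\Gamma'$ with rows $\le v_6$. Then, in the partial order on standard monomials governing the straightening law of \cref{thm2.3}, $p_\Gamma$ occurs as the leading term of the product $X_\ell\cdot p_{\Gamma'}$, and an induction on that order (nested inside the induction on $k$) expresses $p_\Gamma$ through products of generators. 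The combinatorial input I would isolate as a sub-lemma: since only finitely many row-types are $\le v_6$, once a tableau has at least eight rows the pigeonhole principle forces enough repetition among its rows that a complementary balanced pair equal to some $X_\ell$ can always be split off while preserving standardness of the remainder.

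The main obstacle I anticipate is exactly this extraction-and-straightening step. One must choose the peeled-off pair so that, simultaneously, the residual tableau $\Gamma'$ is again standard, balanced, and bounded by $v_6$, and in the straightening of $X_\ell\cdot p_{\Gamma'}$ the target $p_\Gamma$ appears with nonzero coefficient while all other terms are strictly larger in the chosen order and hence already handled by the inner induction. Verifying that such a pair exists for every admissible $\Gamma$ with $k\ge 4$ — and that the genuine exceptions in degrees two and three are captured exactly by the $Y_j$ and by $Z_1,Z_2$ — is the delicate part; the remainder is the finite enumeration of rows $\le v_6$ and routine application of the Pfaffian straightening law.
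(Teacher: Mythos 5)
Your base cases ($k=1,2,3$) and the general idea of splitting a generator off a balanced standard tableau are in the spirit of the paper, but your inductive step contains a fatal gap: both the pigeonhole sub-lemma and the statement $R_k=R_1\cdot R_{k-1}$ for $k\ge 4$ that it is meant to support are \emph{false}. Recall that a row $\underline{i}\le v_6$ is determined by its set of entries $\le 4n+2$, and that a pair of rows is balanced (i.e.\ forms some $X_\ell$) precisely when these two sets are disjoint with union $\{1,\ldots,4n+2\}$. Now take $\Gamma$ to be the tableau of $Y_3^{m}$, i.e.\ $m$ copies of each of the four rows of $Y_3$, whose small-entry sets are $S_1=\{1,3,\ldots,4n-3,4n-2,4n-1,4n,4n+1,4n+2\}$, $S_2=\{1,3,\ldots,4n-3,4n\}$, $S_3=\{2,4,\ldots,4n-4,4n-2,4n+1\}$, $S_4=\{2,4,\ldots,4n-4,4n-1,4n+2\}$. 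This $\Gamma$ is standard, balanced, has all rows $\le v_6$ and $4m\ge 8$ rows with massive repetition, yet \emph{no} pair of its rows is complementary: $S_1$ meets each of $S_2,S_3,S_4$, while $S_2\cup S_3$, $S_2\cup S_4$, $S_3\cup S_4$ have cardinality at most $4n<4n+2$. So no $X_\ell$ can ever be split off, monomially or as a ``leading term'' of $X_\ell\cdot p_{\Gamma'}$ with $\Gamma'$ extracted from $\Gamma$.

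This is not a defect of the extraction method but of the claim itself: $Y_3^{k/2}\notin R_1R_{k-1}$ for every even $k$. For $n=1$ ($G=SO(12)$, $v_6=(3,6,8,9,11,12)$) identify the opposite cell with $6\times 6$ skew-symmetric matrices $(y_{ij})$; by \cref{rmk2.5} one computes $X_1=y_{24}P(\{1,3,5,6\})$, $X_2=y_{34}P(\{1,2,5,6\})$, $X_3=y_{25}P(\{1,3,4,6\})$, $X_4=y_{35}P(\{1,2,4,6\})$, $X_5=y_{26}P(\{1,3,4,5\})$, $X_6=y_{36}P(\{1,2,4,5\})$, and $Y_3=P(\emptyset)P(\{2,3,5,6\})P(\{1,3,4,6\})P(\{1,2,4,5\})$. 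Let $Y_0$ be the block-diagonal point with $y_{12}=y_{13}=y_{23}=y_{45}=y_{46}=y_{56}=1$ and all other entries $0$. The coordinates $\not\le v_6$ are exactly $P(\{1,2,3,j\})$ ($j=4,5,6$) and $\mathrm{Pf}(Y)$, and all of them vanish at $Y_0$, so $Y_0\in X(v_6)$; moreover every $X_i$ vanishes at $Y_0$ (each has a factor among $y_{24},y_{34},y_{25},y_{35},y_{26},y_{36}$), while $Y_3(Y_0)=(y_{23}y_{56})(y_{13}y_{46})(y_{12}y_{45})=1$. Hence no power of $Y_3$ lies in the ideal $(X_1,\ldots,X_6)R\supseteq R_1R_{k-1}$, so $R_k\ne R_1R_{k-1}$ for all even $k$: the degree-two and degree-three generators are genuinely needed in \emph{every} degree, not only in degrees $\le 3$. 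The correct inductive statement, and the one the paper proves, is the weaker $R_k=R_1R_{k-1}+R_2R_{k-2}+R_3R_{k-3}$: since $p_\Gamma$ is literally the product of the Pfaffians of its rows, it suffices to exhibit a sub-multiset of rows forming exactly one of $X_i,Y_j,Z_1,Z_2$ (the complementary rows then automatically form a balanced standard tableau bounded by $v_6$), and the paper does this by a case analysis on $row_1$ and $row_{2k}$ using the counting constraint $c_\Gamma(t)=k$ — no straightening and no pigeonhole argument enter at this stage. You should redirect your inductive step accordingly.
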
}
	
	\begin{proof}
		
		Let $p_{\Gamma} \in R_k$ be a standard monomial. We show that $p_{\Gamma} = p_{\Gamma_1}p_{\Gamma_2},$ where $p_{\Gamma_1}$ is in $R_1$ or $R_2$ or $R_3.$

		For $1 \leq i \leq 2k$ and $1 \leq j \leq 4n+2,$ let $row_i$ denote the $i$-th row, $e_{i,j}$ denote the $(i,j)$-th entry of $\Gamma.$ Let $n_{t,j}$ denote the number of times $t$ appear in the $j$-th column of $\Gamma.$
		
		Since we are considering $X(v_6),$ we have $row_{2k} \leq v_6.$ Thus, for $1 \leq j \leq 2n-2$ or $2n+4 \leq j \leq 4n+2,$ we have $e_{2k,j} \leq 2j.$ Therefore, for $1 \leq j \leq 2n-2$ or $2n+5 \leq j \leq 4n+2,$ we have \begin{equation*}
			e_{i,j}=\left\{\begin{array}{lr}
				2j-1 & \text{for $1 \leq i \leq k$}\\
				2j & \text{for $k+1 \leq i \leq 2k$.}\\
			\end{array}\right.
		\end{equation*}
		Further, since $e_{2k,2n-2} = 4n-4$ (resp. $e_{1, 2n+5}=4n+9$) we have $e_{i,2n-1}=4n-3$ (resp. $e_{i,2n+4}=4n+8$) for all $1 \leq i \leq k$ (resp. $k+1 \leq i \leq 2k$).

		Note that $e_{2k,2n-1} \leq 4n-1.$ Thus, $e_{2k,2n-1}$ is either $4n-2$ or $4n-1$. Also, since $e_{2k,2n} \leq 4n+2,$ we have $e_{2k,2n}$ is either $4n$, $4n+1$ or $4n+2.$ Thus, the followings are the possibilities for $row_{2k}$ 
		\begin{itemize}
			\item[(i)] $\text{YT}^{^{\lambda}}_{_{(2,4,\ldots,4n-4,4n-2,4n)}}$
			\item[(ii)] $\text{YT}^{^{\lambda}}_{_{(2,4,\ldots,4n-4,4n-1,4n)}}$
			\item[(iii)] $\text{YT}^{^{\lambda}}_{_{(2,4,\ldots,4n-4,4n-2,4n+1)}}$
			\item[(iv)] $\text{YT}^{^{\lambda}}_{_{(2,4,\ldots,4n-4,4n-1,4n+1)}}$
			\item[(v)] $\text{YT}^{^{\lambda}}_{_{(2,4,\ldots,4n-4,4n-2,4n+2)}}$
			\item[(vi)] $\text{YT}^{^{\lambda}}_{_{(2,4,\ldots,4n-4,4n-1,4n+2)}}.$
		\end{itemize}
		
		Case (i): Assume that $row_{2k}=\text{YT}^{^{\lambda}}_{_{(2,4,\ldots,4n-4,4n-2,4n)}}$.
		
		 Then \begin{equation*}
			e_{i,j}=\left\{\begin{array}{lr}
				4n-2 & \text{for $k+1 \leq i \leq 2k$ and $j=2n-1$}\\
				4n-1 & \text{for $1 \leq i \leq k$ and $j=2n$}\\
				4n & \text{for $k+1 \leq i \leq 2k$ and $j=2n$}\\
				4n+1 & \text{for $1 \leq i \leq k$ and $j=2n+1.$}\\
			\end{array}\right.
		\end{equation*} Therefore, $row_1=\text{YT}^{^{\lambda}}_{_{(1,3,\ldots,4n-3,4n-1,4n+1,4n+2)}}$ and hence, $row_{1}, row_{2k}$ together give a factor $X_1$ of $p_{\Gamma}$.
		
		Case (ii): Assume that $row_{2k}=\text{YT}^{^{\lambda}}_{_{(2,4,\ldots,4n-4,4n-1,4n)}}$.
		
		 Then $e_{i,2n}=4n$ for all $k+1 \leq i \leq 2k$ and $e_{1,2n}=4n-2$ or $4n-1.$ If $e_{1,2n}=4n-1,$ then all $4n-2$ appear in the $(2n-1)$-th column. Further, since $e_{2k,2n-1}=4n-1,$ $n_{4n-2,2n-1} \leq k-1,$ which is a contradiction to \cref{eq12.2}. Thus, $e_{1,2n}=4n-2.$ Therefore, $row_{1}=\text{YT}^{^{\lambda}}_{_{(1,3,\ldots,4n-3,4n-2,4n+1,4n+2)}}$ and hence, $row_{1}, row_{2k}$ together give a factor $X_2$ of $p_{\Gamma}$.  
		
		Case (iii): Assume that $row_{2k}=\text{YT}^{^{\lambda}}_{_{(2,4,\ldots,4n-4,4n-2,4n+1)}}$. 
		
		Then $e_{i,2n-1}=4n-2$ for all $k+1 \leq i \leq 2k$ and $e_{i,2n}=4n-1$ for all $1 \leq i \leq k.$ Since $e_{2k,2n}=4n+1,$ we have $n_{4n,2n} \leq k-1$. Thus, $n_{4n,2n+1} \geq 1$. Thus, we have $e_{1,2n+1}=4n$. Further, since $\Gamma$ satisfies the property that number of entries greater than $4n+2$ in each row is even, we have $row_{1}=\text{YT}^{^{\lambda}}_{_{(1,3,\ldots,4n-3,4n-1,4n,4n+1)}}$ or $\text{YT}^{^{\lambda}}_{_{(1,3,\ldots,4n-3,4n-1,4n,4n+2)}}$ (see \cref{subsection2.1}). 
		
		Claim: $row_{1} \neq \text{YT}^{^{\lambda}}_{_{(1,3,\ldots,4n-3,4n-1,4n,4n+1)}}.$ 
		
		Assume that $row_1 = \text{YT}^{^{\lambda}}_{_{(1,3,\ldots,4n-3,4n-1,4n,4n+1)}}.$ Let $k'$ denote the number of rows in $\Gamma$ those contain exactly $2n+2$ entries less than or equal to $4n+2.$ Then remaining $2k-k'$ number of rows in $\Gamma$ contains exactly $2n$ entries less than or equal to $4n+2.$ Since by \cref{eq12.2}, $c_{\Gamma}(t)=k$ for all $1 \leq t \leq 4n+2,$ we have $(2n+2)k'+2n(2k-k')=(4n+2)k.$ Thus, we have $k'=k.$ Thus, $row_i$ ($1 \leq i \leq k$) contains exactly $2n$ entries less than or equal to $4n+2$ and $row_{i}$ ($k+1 \leq i \leq 2k$) contains exactly $2n+2$ entries less than or equal to $4n+2$. Further, since $e_{2k,2n}=4n+1,$ all $4n+2$ appear in the $(2n+2)$-th column and hence, $n_{4n+2,2n+2} \leq k-1$, which is a contradiction to \cref{eq12.2}.

		On the other hand, if $row_1=\text{YT}^{^{\lambda}}_{_{(1,3,\ldots,4n-3,4n-1,4n,4n+2)}},$ then $row_{1}, row_{2k}$ together give a factor $X_3$ of $p_{\Gamma}.$	
		
		Case (iv): Assume that $row_{2k}=\text{YT}^{^{\lambda}}_{_{(2,4,\ldots,4n-4,4n-1,4n+1)}}$. 
		
		Since $e_{2k,2n-1}=4n-1,$ we have $n_{4n-2,2n-1} \leq k-1.$ Hence, $n_{4n-2,2n} \geq 1.$ Therefore, $e_{1,2n}=4n-2$. Now we claim that $e_{1,2n+1} \leq 4n.$ If $e_{1,2n+1} \geq 4n+1,$ then all integers less than or equal to $4n$ appear in the first $2n$ columns. Since $e_{2k,2n}=4n+1,$ we have $\sum_{t=1}^{4n}\sum_{j=1}^{2n}n_{t,j} \leq 4nk-1,$ which is a contradiction to \cref{eq12.2}. Therefore, $e_{1,2n+1} \leq 4n.$ Hence, $row_1$ is one of the following:
		\begin{itemize}
			\item $\text{YT}^{^{\lambda}}_{_{(1,3,\ldots,4n-3,4n-2,4n-1,4n,4n+1,4n+2)}}$
			\item $\text{YT}^{^{\lambda}}_{_{(1,3,\ldots,4n-3,4n-2,4n-1,4n)}}$
			\item $\text{YT}^{^{\lambda}}_{_{(1,3,\ldots,4n-3,4n-2,4n-1,4n+1)}}$
			\item $\text{YT}^{^{\lambda}}_{_{(1,3,\ldots,4n-3,4n-2,4n-1,4n+2)}}$
			\item $\text{YT}^{^{\lambda}}_{_{(1,3,\ldots,4n-3,4n-2,4n,4n+1)}}$
			\item $ \text{YT}^{^{\lambda}}_{_{(1,3,\ldots,4n-3,4n-2,4n,4n+2)}}.$
		\end{itemize}
		
		If $row_1=
		\text{YT}^{^{\lambda}}_{_{(1,3,\ldots,4n-3,4n-2,4n-1,4n)}},$  $\text{YT}^{^{\lambda}}_{_{(1,3,\ldots,4n-3,4n-2,4n-1,4n+1)}},$ or $\text{YT}^{^{\lambda}}_{_{(1,3,\ldots,4n-3,4n-2,4n,4n+1)}},$
		then by using the argument as in the third paragraph (line no. 7--12) of case (iii), we see that $row_i$ ($1 \leq i \leq k$) contains exactly $2n+2$ entries less than or equal to $4n+2$ and $row_{i}$ ($k+1 \leq i \leq 2k$) contains exactly $2n$ entries less than or equal to $4n+2$. Further, since $e_{2k,2n}=4n+1,$ all $4n+2$ appear in the $(2n+2)$-th column. Since $e_{1,2n+2}=4n$ or $4n+1,$ we have $n_{4n+2,2n+2} \leq k-1$, which is a contradiction to \cref{eq12.2}.
		
		If $row_1 = \text{YT}^{^{\lambda}}_{_{(1,3,\ldots,4n-3,4n-2,4n-1,4n,4n+1,4n+2)}},$ then for $k+1 \leq i \leq 2k-1,$ we have $e_{i,2n-1}=4n-2$ or $4n-1.$ Further, since $e_{1,2n+1}=4n-1,$ we have $n_{4n-2,2n-1}+n_{4n-1,2n-1}+n_{4n-1,2n+1} \geq k+1.$ Thus, $n_{4n-2,2n}+n_{4n-1,2n} \leq k-1.$ Thus, $e_{k,2n} = 4n.$ Hence, $row_{k}=\text{YT}^{^{\lambda}}_{_{(1,3,\ldots,4n-3,4n,4n+1,4n+2)}}$ or $\text{YT}^{^{\lambda}}_{_{(1,3,\ldots,4n-3,4n)}}.$ If $row_{k}=\text{YT}^{^{\lambda}}_{_{(1,3,\ldots,4n-3,4n,4n+1,4n+2)}},$ then $\sum_{j=2n+1}^{4n+2}\sum_{t=4n+3}^{8n+4}n_{t,j} \leq (4n+2)k-2,$ which is a contradiction to \cref{eq12.2}. If $row_{k}=\text{YT}^{^{\lambda}}_{_{(1,3,\ldots,4n-3,4n)}},$ then $e_{i,2n+1}=4n+3$ for all $k \leq i \leq 2k,$ which is a contradiction to \cref{eq12.2}.

		If $row_{1}=\text{YT}^{^{\lambda}}_{_{(1,3,\ldots,4n-3,4n-2,4n,4n+2)}},$ then hence, $row_{1}, row_{2k}$ together give a factor $X_4$ of $p_{\Gamma}$.
		
		If $row_{1}=\text{YT}^{^{\lambda}}_{_{(1,3,\ldots,4n-3,4n-2,4n-1,4n+2)}},$ then by using the argument as in the third paragraph (line no. 7--12) of case (iii), we see that $row_i$ ($1 \leq i \leq k$) contains exactly $2n+2$ entries less than or equal to $4n+2$ and $row_{i}$ ($k+1 \leq i \leq 2k$) contains exactly $2n$ entries less than or equal to $4n+2$. Further, since $e_{1,2n+2}=4n+2,$ we have $e_{i,2n+2}=4n+2$ for all $1 \leq i \leq k.$ Since $n_{4n-2,2n-1}+n_{4n-1,2n-1}+n_{4n-1,2n+1} \geq k+1,$ we have $n_{4n-2,2n}+n_{4n-1,2n} \leq k-1.$ Hence, $e_{k,2n} \geq 4n.$ Further, since $e_{2k,2n}=4n+1,$ we have $e_{k,2n}=4n.$ Thus, $row_{k}=\text{YT}^{^{\lambda}}_{_{(1,3,\ldots,4n-3,4n,4n+1,4n+2)}}.$ Since $e_{k,2n+1}=4n+1$ we have $n_{4n+1,2n} \leq k-1.$ Thus, $e_{k+1,2n}=4n.$ Therefore, $row_{k+1}=\text{YT}^{^{\lambda}}_{_{(2,4,\ldots,4n-4,4n-2,4n)}}$ and hence, $row_1, row_k, row_{k+1}, row_{2k}$ together give a factor $Y_1$ of $p_{\Gamma}.$ 
		
		Case (v): Assume that $row_{2k}=\text{YT}^{^{\lambda}}_{_{(2,4,\ldots,4n-4,4n-2,4n+2)}}$. Then $e_{i,2n-1}=4n-2$ for all $k+1 \leq i \leq 2k$ and $e_{i,2n}=4n-1$ for all $1 \leq i \leq k.$  Further, since $e_{2k,2n}=4n+2,$ we have $n_{4n,2n} \leq k-1.$ Thus, $n_{4n,2n+1} \geq 1.$ Hence, $e_{1,2n+1}=4n.$ Therefore, $e_{1,2n+2}=4n+1$ or $4n+2.$ 
		
		Claim: $e_{1,2n+2} \neq 4n+2.$
		
		Assume that $e_{1,2n+2} = 4n+2.$ By using the argument as in the third paragraph (line no. 7--12) of case (iii), we see that $row_i$ ($1 \leq i \leq k$) contains exactly $2n+2$ entries less than or equal to $4n+2$ and $row_{i}$ ($k+1 \leq i \leq 2k$) contains exactly $2n$ entries less than or equal to $4n+2$. Further, since $e_{2k,2n}=4n+2,$ we have $n_{4n+2,2n+2}+n_{4n+2,2n} \geq k+1,$ which is a contradiction to \cref{eq12.2}.
		
		On the other hand, if $e_{1,2n+2}=4n+1,$ then $row_1=\text{YT}^{^{\lambda}}_{_{(1,3,\ldots,4n-3,4n-1,4n,4n+1)}}.$ Therefore, $row_1, row_{2k}$ together give a factor $X_5$ of $p_{\Gamma}$.
		
		Case (vi): Assume that $row_{2k}=\text{YT}^{^{\lambda}}_{_{(2,4,\ldots,4n-4,4n-1,4n+2)}}$. 
		
		Then $e_{1,2n}=4n-2.$ Since $e_{2k,2n+1}=4n+4,$ we have $e_{1,2n+2} \leq 4n+1.$ Indeed, if $e_{1,2n+2}>4n+2,$ then $\sum_{j=2n+1}^{4n+2}\sum_{t=4n+3}^{8n+4}n_{t,j} \geq (4n+2)k+1,$ which is a contradiction to \cref{eq12.2}. If $e_{1,2n+2}=4n+2,$ then by using the argument as in the third paragraph (line no. 7--12) of case (iii), we see that $row_i$ ($1 \leq i \leq k$) contains exactly $2n+2$ entries less than or equal to $4n+2$ and $row_{i}$ ($k+1 \leq i \leq 2k$) contains exactly $2n$ entries less than or equal to $4n+2$. Thus, $e_{i,2n+2}=4n+2,$ for all $1 \leq i \leq k.$ Further, since $e_{2k,2n}=4n+2,$ we have $n_{4n+2,2n}+n_{4n+2,2n+2} \geq k+1,$ which is a contradiction to \cref{eq12.2}. Therefore, $row_{1}$ is one of the following:
		\begin{itemize}
			\item  $ \text{YT}^{^{\lambda}}_{_{(1,3,\ldots,4n-3,4n-2,4n-1,4n+1)}}$
			\item $ \text{YT}^{^{\lambda}}_{_{(1,3,\ldots,4n-3,4n-2,4n,4n+1)}}$
			\item $\text{YT}^{^{\lambda}}_{_{(1,3,\ldots,4n-3,4n-2,4n-1,4n,4n+1,4n+2)}}$ 
			\item $\text{YT}^{^{\lambda}}_{_{(1,3,\ldots,4n-3,4n-2,4n-1,4n)}}.$
		\end{itemize} 
		
		Subcase (i): Assume that $row_1=\text{YT}^{^{\lambda}}_{_{(1,3,\ldots,4n-3,4n-2,4n-1,4n+1)}}.$ 
		
		Since $n_{4n-1,2n+1} \geq 1,$ we have $e_{k+1,2n-1}=4n-2.$ Further, since $n_{4n-2,2n-1}+n_{4n-1,2n-1}+n_{4n-1,2n+1} \geq k+1,$ we have $n_{4n-2,2n}+n_{4n-1,2n} \leq k-1.$ Thus, $e_{k,2n} \geq 4n.$ Also, by using the argument as in the third paragraph (line no. 7--12) of case (iii), we see that $row_i$ ($1 \leq i \leq k$) contains exactly $2n+2$ entries less than or equal to $4n+2$ and $row_{i}$ ($k+1 \leq i \leq 2k$) contains exactly $2n$ entries less than or equal to $4n+2$. Therefore, $row_{k}=\text{YT}^{^{\lambda}}_{_{(1,3,\ldots,4n-3,4n,4n+1,4n+2)}}.$ Hence, $e_{k+1,2n} \geq 4n.$ If $e_{k+1,2n} \geq 4n+1,$ then $\sum_{t=4n+1}^{4n+2}\sum_{j=2n}^{2n+2}n_{t,j} \geq 2k+1,$ which is a contradiction to \cref{eq12.2}. Hence, $e_{k+1,2n}=4n$ and therefore, $row_1, row_{k}, row_{k+1}, row_{2k}$ together give a factor $Y_2$ of $p_{\Gamma}.$ 
		
		Subcase (ii): If $row_1=\text{YT}^{^{\lambda}}_{_{(1,3,\ldots,4n-3,4n-2,4n,4n+1)}},$ then $row_1, row_{2k}$ together give a factor $X_6$ of $p_{\Gamma}.$
		
		Subcase (iii): Assume that $row_{1}=\text{YT}^{^{\lambda}}_{_{(1,3,\ldots,4n-3,4n-2,4n-1,4n,4n+1,4n+2)}}.$
		
		 Since $e_{1,2n+1}=4n-1,$ we have $e_{k+1,2n-1}=4n-2.$ Further, since $n_{4n-2,2n-1}+n_{4n-1,2n-1}+n_{4n-1,2n+1} \geq k+1,$ we have $n_{4n-2,2n}+n_{4n-1,2n} \leq k-1.$ Hence, $e_{k,2n} \geq 4n.$ Next we show that $e_{k,2n}=4n.$ If $e_{k,2n}=4n+2,$ then we have $n_{4n+2,2n} \geq k+1,$ which is a contradiction to \cref{eq12.2}. If $e_{k,2n}=4n+1,$ then $row_{k}=\text{YT}^{^{\lambda}}_{_{(1,3,\ldots,4n-3,4n+1)}}.$ Thus, $e_{i,2n+2}=4n+5$ for all $k \leq i \leq 2k,$ which is a contradiction to \cref{eq12.2}. Thus, $e_{k,2n}=4n$. Therefore, $row_k$ is either $\text{YT}^{^{\lambda}}_{_{(1,3,\ldots,4n-3,4n,4n+1,4n+2)}}$ or $\text{YT}^{^{\lambda}}_{_{(1,3,\ldots,4n-3,4n)}}.$ If $row_k=\text{YT}^{^{\lambda}}_{_{(1,3,\ldots,4n-3,4n,4n+1,4n+2)}},$ then  $\sum_{j=2n+1}^{4n+2}\sum_{t=4n+3}^{8n+4}n_{t,j} \leq (4n+2)k-2,$ which is a contradiction to \cref{eq12.2}.
		
		Now assume that $row_{k}=\text{YT}^{^{\lambda}}_{_{(1,3,\ldots,4n-3,4n)}}.$ 
		
		Since $e_{1,2n+4}=4n+2,$ we have $e_{k+1,2n}=4n$ or $4n+1.$ If $e_{k+1,2n}=4n+1,$ then $row_{k+1}=\text{YT}^{^{\lambda}}_{_{(2,4,\ldots,4n-4,4n-2,4n+1)}}.$ Therefore, $row_1, row_{k}, row_{k+1}, row_{2k}$ together give a factor $Y_3$ of $p_{\Gamma}.$ 
		
		If $e_{k+1,2n}=4n,$ then $row_{k+1}=\text{YT}^{^{\lambda}}_{_{(2,4,\ldots,4n-4,4n-2,4n)}}$. Since $row_k$ does not contain $4n+1,$ there exists $l$ for some $k+2 \leq l \leq 2k-1$ such that $row_{l}$ contains $4n+1.$ Hence, $row_l$ is either $\text{YT}^{^{\lambda}}_{_{(2,4,\ldots,4n-4,4n-2,4n+1)}}$ or $\text{YT}^{^{\lambda}}_{_{(2,4,\ldots,4n-4,4n-1,4n+1)}}$. If $row_l=\text{YT}^{^{\lambda}}_{_{(2,4,\ldots,4n-4,4n-2,4n+1)}},$ then $row_1, row_{k}, row_{l}, row_{2k}$ together give a factor $Y_3$ of $p_{\Gamma}$. 
		
		Consider $row_l=\text{YT}^{^{\lambda}}_{_{(2,4,\ldots,4n-4,4n-1,4n+1)}}.$ Then we study $row_{\frac{k}{2}+1}.$ 
		
		Assume that $k$ is even. 
		
		If $e_{\frac{k}{2}+1,2n+1}=4n-1,$ then $\sum_{t=4n-2}^{4n-1}(n_{t,2n-1}+n_{t,2n}+n_{t,2n+1}) \geq 2k+2,$ which is a contradiction to \cref{eq12.2}. If $e_{\frac{k}{2}+1,2n+1} \geq 4n+3,$ then $e_{\frac{k}{2}+1,2n+1} = 4n+3$ and $e_{\frac{k}{2}+1,2n+2} = 4n+4.$ Thus, $\sum_{t=4n+3}^{4n+4}\sum_{j=2n+1}^{2n+2}n_{t,j}\geq 2k+1,$ which is a contradiction to \cref{eq12.2}. Hence, $row_{\frac{k}{2}+1}$ is in the following set of rows.
		\begin{itemize}
			\item $\text{YT}^{^{\lambda}}_{_{(1,3,\ldots,4n-3,4n-2,4n,4n+1)}}$
			\item $\text{YT}^{^{\lambda}}_{_{(1,3,\ldots,4n-3,4n-2,4n,4n+2)}}$
			\item $\text{YT}^{^{\lambda}}_{_{(1,3,\ldots,4n-3,4n-2,4n+1,4n+2)}}$ 
			\item $\text{YT}^{^{\lambda}}_{_{(1,3,\ldots,4n-3,4n-1,4n,4n+1)}}$
			\item $\text{YT}^{^{\lambda}}_{_{(1,3,\ldots,4n-3,4n-1,4n,4n+2)}}$
			\item $\text{YT}^{^{\lambda}}_{_{(1,3,\ldots,4n-3,4n-1,4n+1,4n+2)}}$
			\item $\text{YT}^{^{\lambda}}_{_{(1,3,\ldots,4n-3,4n,4n+1,4n+2)}}.$
		\end{itemize}
		
		If $row_{\frac{k}{2}+1}=\text{YT}^{^{\lambda}}_{_{(1,3,\ldots,4n-3,4n-2,4n,4n+1)}},$ then $row_{\frac{k}{2}+1}, row_{2k}$ give a factor $X_6$ of $p_{\Gamma}.$
		
		If $row_{\frac{k}{2}+1}=\text{YT}^{^{\lambda}}_{_{(1,3,\ldots,4n-3,4n-2,4n,4n+2)}},$ then $row_{\frac{k}{2}+1}, row_{l}$ give a factor $X_4$ of $p_{\Gamma}.$ 
		
		If $row_{\frac{k}{2}+1}=\text{YT}^{^{\lambda}}_{_{(1,3,\ldots,4n-3,4n-2,4n+1,4n+2)}},$ then $row_{i} (i=1,\frac{k}{2}+1,k,k+1,l,2k)$ give a factor $Z_2$ of $p_{\Gamma}.$ 
		
		If $row_{\frac{k}{2}+1}=\text{YT}^{^{\lambda}}_{_{(1,3,\ldots,4n-3,4n-1,4n,4n+1)}}$ or $\text{YT}^{^{\lambda}}_{_{(1,3,\ldots,4n-3,4n-1,4n,4n+2)}},$ then $n_{4n-2,2n} \leq \frac{k}{2}$. Thus, $n_{4n-2,2n-1} \geq \frac{k}{2}$. Further, since $row_l=\text{YT}^{^{\lambda}}_{_{(2,4,\ldots,4n-4,4n-1,4n+1)}},$ we have $row_{\frac{3k}{2}}=\text{YT}^{^{\lambda}}_{_{(2,4,\ldots,4n-4,4n-2,4n)}}$. Therefore,  for $1 \leq i \leq \frac{3k}{2},$ $e_{i,2n}=4n-2,4n-1$ or $4n$ and for $1 \leq i \leq \frac{k}{2}+1,$ $e_{i,2n+1}=4n-1$ or $4n.$ Hence, $\sum_{j=2n-1}^{2n+1}\sum_{t=4n-2}^{4n}n_{t,j}\geq k+\frac{3k}{2}+\frac{k}{2}+1=3k+1$, which is a contradiction to \cref{eq12.2}. 
		
		If $row_{\frac{k}{2}+1}=\text{YT}^{^{\lambda}}_{_{(1,3,\ldots,4n-3,4n-1,4n+1,4n+2)}},$ then $row_{\frac{k}{2}+1}, row_{k+1}$ give a factor $X_1$ of $p_{\Gamma}.$ 
		
		If $row_{\frac{k}{2}+1}=\text{YT}^{^{\lambda}}_{_{(1,3,\ldots,4n-3,4n,4n+1,4n+2)}},$ then for $1 \leq i \leq \frac{k}{2},$ $e_{i,2}=4n-2$ and $e_{i,3}=4n-1.$  Thus, $row_i=\text{YT}^{^{\lambda}}_{_{(1,3,\ldots,4n-2,4n)}}$ for all $k+1 \leq i \leq \frac{3k}{2}.$ Hence, $e_{i,2}=4n$ for all $\frac{k}{2}+1 \leq i \leq \frac{3k}{2}$. Since $e_{1,4}=4n,$ we have $n_{4n,2}+n_{4n,4} \geq k+1,$ which is a contradiction to \cref{eq12.2}.

		Similarly, if $k+1$ is even, then the proof is similar.  
		
		Subcase (iv): If $row_{1}=\text{YT}^{^{\lambda}}_{_{(1,3,\ldots,4n-3,4n-2,4n-1,4n)}},$ then the proof is similar to subcase (iii). 
		
	\end{proof}
	
	\begin{lemma}\label{lem5.2}
		The following relations in $X_i$'s $(1 \leq i \leq 6),$ $Y_j$'s ($1 \leq j \leq 4$) hold in $R_2.$
		\begin{itemize}
			\item[(i)] $X_2X_3-X_1X_4+Y_1-Y_3=0.$
			\item[(ii)] $X_4X_5-X_3X_6+Y_4-Y_3=0.$
			\item[(iii)] $X_2X_5-X_1X_6+Y_2-Y_3=0.$
		\end{itemize}
	\end{lemma}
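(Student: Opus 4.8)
The three identities in \cref{lem5.2} are relations among the degree-$2$ standard monomials $X_i$ and the degree-$2$ standard monomials $Y_j$ arising in $R_2 = H^0(X(v_6), \mathcal{L}^{\otimes 2}(2\lambda))^T$. Each $X_i$ and $Y_j$ is, after restriction to the skew-symmetric big cell $SkM_{4n+2}$, a product of Pfaffians $q_{\underline{i}} = P(\underline{i}(B))$ (see \cref{rmk2.4} and \cref{rmk2.5}). The plan is therefore to translate each monomial into a product of Pfaffians $P(I)$ for explicit index sets $I \subseteq \{1,2,\ldots,4n+2\}$, and then to realize each of the three claimed identities as an instance of the Pfaffian relation in \cref{thm2.3}.

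\medskip

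\noindent First I would fix notation for the rows. Writing each $X_i$ and $Y_j$ as the product of its rows, and using \cref{rmk2.5} to pass from the tuple $(i_1,\ldots,i_r)$ of entries $\le 4n+2$ in a given row to the corresponding index set $I$ of a Pfaffian $P(I)$, I would record for every factor the set $I$ it contributes. The crucial observation is that all six quantities $X_2X_3$, $X_1X_4$, $Y_1$, $Y_3$ (for part (i)), and the analogous sextuples for (ii) and (iii), are products of exactly two Pfaffians whose index sets differ only in the positions $\{4n-1,4n,4n+1,4n+2\}$ (equivalently, the ``stable'' part $\{1,3,\ldots,4n-3\}\cup\{2,4,\ldots,4n-4\}$ of each row is common to all terms and factors out). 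Thus after cancelling the common columns, I would reduce each identity to a relation among products $P(I)\cdot P(J)$ where $I,J$ range over small index sets inside $\{4n-1,4n,4n+1,4n+2\}$.

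\medskip

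\noindent The heart of the argument is the application of \cref{thm2.3}. Given two subsets $I_1, I_2$ of odd cardinality, the theorem yields $\sum_{\tau=1}^{t}(-1)^\tau P(I_1\Delta\{i_\tau\})\, P(I_2\Delta\{i_\tau\})=0$, where $i_1<\cdots<i_t$ enumerate $I_1\Delta I_2$. The plan is to choose, for each of (i), (ii), (iii), an appropriate pair $(I_1,I_2)$ so that the symmetric difference $I_1\Delta I_2$ has exactly four elements (drawn from $\{4n-1,4n,4n+1,4n+2\}$), making the sum a four-term alternating relation; matching the four products $P(I_1\Delta\{i_\tau\})P(I_2\Delta\{i_\tau\})$ termwise against the four monomials $X_2X_3$, $X_1X_4$, $Y_1$, $Y_3$ (and their analogues) then gives the stated identity up to the signs $(-1)^\tau$. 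I would verify the sign bookkeeping carefully, since the normalization convention for the Pfaffian fixes signs and an incorrect ordering of $I_1\Delta I_2$ would flip a term. The main obstacle I anticipate is precisely this sign and index-set matching: one must choose $I_1, I_2$ so that all four symmetric differences $I_1\Delta\{i_\tau\}$ and $I_2\Delta\{i_\tau\}$ reproduce exactly the index sets of the rows of the four monomials, and then confirm that the two terms with a given sign correspond to the ``positive'' monomials ($X_2X_3$, $Y_1$) and the two with the opposite sign to the ``negative'' monomials ($X_1X_4$, $Y_3$).

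\medskip

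\noindent Finally, having established each identity as a polynomial relation on the opposite big cell $SkM_{4n+2}$ via \cref{thm2.3}, I would note that since the big cell is dense in $G/P$ and the sections involved are standard monomials in $R_2$ (which separate points on a dense open set), a relation holding identically on $SkM_{4n+2}$ holds as an equality of sections in $R_2$. This upgrades the three Pfaffian identities from the big cell to genuine relations in the homogeneous coordinate ring, completing the proof of all three parts.
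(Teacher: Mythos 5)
Your plan has two concrete gaps, one of bookkeeping and one structural, and the structural one is fatal to the approach as described.

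First, the bookkeeping. Each term in the identities is a product of \emph{four} Pfaffians, not two: the $Y_j$ are tableaux of shape $4\lambda$ (four rows), and $X_iX_j$ is a product of two shape-$2\lambda$ tableaux, hence again four rows, each row being a single Pfaffian by \cref{rmk2.4}. \cref{thm2.3}, by contrast, is a relation among products of \emph{two} Pfaffians $P(I_1\Delta\{i_\tau\})P(I_2\Delta\{i_\tau\})$. So no single application of \cref{thm2.3} can be matched termwise against $X_2X_3-X_1X_4+Y_1-Y_3$; the identity cannot be realized as one four-term Pfaffian relation. What is actually needed (and what the paper does) is to straighten \emph{pairs of rows}: apply \cref{thm2.3} to the product of the first rows of $X_2$ and $X_3$, separately to the product of their second rows, multiply the resulting expansions, and then apply further straightening to the cross terms.

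Second, and more seriously: the relations (i)--(iii) are \emph{not} identities on $G/P$, so they cannot be proved on the opposite big cell $SkM_{4n+2}$ as you propose in your final paragraph. They hold in $R_2=H^0(X(v_6),\mathcal{L}^{\otimes 2}(2\lambda))^T$, i.e.\ only after restriction to the Schubert variety $X(v_6)$. In the full straightening law on $G/P$, each pair of rows expands into \emph{three} standard monomials, e.g.
\begin{equation*}
\text{YT}^{^{2\lambda}}_{_{(2,\ldots,4n-2,4n+1),(2,\ldots,4n-1,4n)}}
=\text{YT}^{^{2\lambda}}_{_{(2,\ldots,4n-2,4n),(2,\ldots,4n-1,4n+1)}}
-\text{YT}^{^{2\lambda}}_{_{(2,\ldots,4n-2,4n-1),(2,\ldots,4n,4n+1)}}
+\text{YT}^{^{2\lambda}}_{_{(2,\ldots,4n-2,4n-1,4n,4n+1),(2,\ldots,4n-4)}},
\end{equation*}
and the last two terms do not vanish on $G/P$; they vanish on $X(v_6)$ precisely because their rows are not $\le v_6$ in the Bruhat order. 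The paper invokes this vanishing repeatedly (``since we are working in the Schubert variety $X(v_6)$\ldots'') to collapse three-term straightening laws to one term, and without it the expansion of $X_2X_3$ contains extra standard monomials with nonzero coefficients, so $X_2X_3-X_1X_4+Y_1-Y_3\neq 0$ as a section on $G/P$. Since standard monomials are linearly independent on $G/P$, no sign bookkeeping can rescue a proof carried out on $SkM_{4n+2}$: your proposed final step runs in the wrong direction, deducing a relation on $X(v_6)$ from one on the big cell of $G/P$, whereas the relation only exists because of the passage to $X(v_6)$.
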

	\begin{proof}
		$(i):$ 
		Note that
		$$X_2X_3=\text{YT}^{^{4\lambda}}_{_{(1,3,\ldots,4n-3,4n-2,4n+1,4n+2),(1,3,\ldots,4n-3,4n-1,4n,4n+2),(2,4,\ldots,4n-4,4n-1,4n),(2,4,\ldots,4n-4,4n-2,4n+1)}}.$$
		
		Now we see that $\text{YT}^{^{2\lambda}}_{_{(1,\ldots,4n-3,4n-2,4n+1,4n+2),(1,\ldots,4n-3,4n-1,4n,4n+2)}}$ is a nonstandard monomial. We apply \cref{thm2.3}, \cref{rmk2.4} and \cref{rmk2.5} to express the above non-standard monomial as a linear combination of standard monomials.
		
		Let $\underline{i}_1=(1,3,\ldots,4n-3,4n-2,4n+1,4n+2,4n+5,4n+6,4n+9,\ldots,8n+3)$ and $\underline{i}_2=(1,3,\ldots,4n-3,4n-1,4n,4n+2,4n+4,4n+7,4n+9,\ldots,8n+3).$ Then $\underline{i}_1(B)=(2,4,\ldots,4n-4,4n-1,4n)$ and $\underline{i}_2(B)=(2,4,\ldots,4n-4,4n-2,4n+1)$ are two subsets of $\{1,2,\ldots,4n+2\}.$
		
		In order to apply \cref{thm2.3} set $I_1=\{2,4,\ldots,4n-4,4n-1\}$ and $I_2=\{2,4,\ldots,4n-4,4n-2,4n,4n+1\}.$ Note that $I_1\Delta I_2=\{4n-2,4n-1,4n,4n+1\}.$ Then we have 
		
		$P(I_1 \Delta \{4n-2\})\cdot P(I_2\Delta \{4n-2\})-P(I_1 \Delta \{4n-1\})\cdot P(I_2 \Delta \{4n-1\})+P(I_1 \Delta \{4n\})\cdot P(I_2 \Delta \{4n\})-P(I_1 \Delta \{4n+1\})\cdot P(I_2 \Delta \{4n+1\})=0.$
		
		By using \cref{rmk2.5} we have the following:
		
		$P(I_1 \Delta \{4n-2\})=q_{\{1,3,\ldots,4n-3,4n,4n+1,4n+2,4n+6,4n+7,4n+9,\ldots,8n+4\}}$
		
		$P(I_2 \Delta \{4n-2\})=q_{\{1,3,\ldots,4n-3,4n-2,4n-1,4n+2,4n+4,4n+5,4n+9,\ldots,8n+4\}}$
		
		$P(I_1 \Delta \{4n-1\})=q_{\{1,3,\ldots,4n-3,4n-2,4n-1,4n,4n+1,4n+2,4n+9,\ldots,8n+4\}}$
		
		$P(I_2 \Delta \{4n-1\})=q_{\{1,3,\ldots,4n-3,4n+2,4n+4,4n+5,4n+6,4n+7,4n+9,\ldots,8n+4\}}$
		
		$P(I_1 \Delta \{4n\})=q_{\underline{i}_1}$
		
		$P(I_2 \Delta \{4n\})=q_{\underline{i}_2}$
		
		$P(I_1 \Delta \{4n+1\})=q_{\{1,3,\ldots,4n-3,4n-2,4n,4n+2,4n+4,4n+6,4n+9,\ldots,8n+4\}}$
		
		$P(I_2 \Delta \{4n+1\})=q_{\{1,3,\ldots,4n-3,4n-1,4n+1,4n+2,4n+5,4n+7,4n+9,\ldots,8n+4\}}.$
		
		Therefore, we have the following straightening laws in $G/P^{\alpha_{4n+2}}$
		
		$\begin{matrix}\text{YT}^{^{2\lambda}}_{_{(1,3,\ldots,4n-3,4n-2,4n+1,4n+2),(1,3,\ldots,4n-3,4n-1,4n,4n+2)}}\\
			=\text{YT}^{^{2\lambda}}_{_{(1,3,\ldots,4n-3,4n-2,4n,4n+2),(1,3,\ldots,4n-3,4n-1,4n+1,4n+2)}}\\
			-\text{YT}^{^{2\lambda}}_{_{(1,3,\ldots,4n-3,4n-2,4n-1,4n+2),(1,3,\ldots,4n-3,4n,4n+1,4n+2)}}\\
			+\text{YT}^{^{2\lambda}}_{_{(1,3,\ldots,4n-3,4n-2,4n-1,4n,4n+1,4n+2),(1,3,\ldots,4n-3,4n+2)}}.
		\end{matrix}$\\
		
		Similarly, by using \cref{thm2.3}, \cref{rmk2.4} and \cref{rmk2.5} we have
		
		$\begin{matrix}\text{YT}^{^{2\lambda}}_{_{(2,4,\ldots,4n-4,4n-2,4n+1),(2,4,\ldots,4n-4,4n-1,4n)}}
			&=\text{YT}^{^{2\lambda}}_{_{(2,4,\ldots,4n-4,4n-2,4n),(2,4,\ldots,4n-4,4n-1,4n+1)}}\\
			&-\text{YT}^{^{2\lambda}}_{_{(2,4,\ldots,4n-4,4n-2,4n-1),(2,4,\ldots,4n-4,4n,4n+1)}}\\
			&+\text{YT}^{^{2\lambda}}_{_{(2,4,\ldots,4n-4,4n-2,4n-1,4n,4n+1),(2,4,\ldots,4n-4)}}.
		\end{matrix}$
		
		Since we are working in the Schubert variety $X(v_6)$, above straightening law becomes
		\begin{equation}\label{eq3.5}
			\text{YT}^{^{2\lambda}}_{_{(2,4,\ldots,4n-4,4n-2,4n+1),(2,4,\ldots,4n-4,4n-1,4n)}}=\text{YT}^{^{2\lambda}}_{_{(2,4,\ldots,4n-4,4n-2,4n),(2,4,\ldots,4n-4,4n-1,4n+1)}}.
		\end{equation}

		Therefore, by using the above straightening laws we have
		
		$\begin{matrix}
			X_2X_3&=&\text{YT}^{^{4\lambda}}_{_{(1,3,\ldots,4n-3,4n-2,4n,4n+2),(1,3,\ldots,4n-3,4n-1,4n+1,4n+2),(2,4,\ldots,4n-4,4n-2,4n),(2,4,\ldots,4n-4,4n-1,4n+1)}}\\
			&-&\text{YT}^{^{4\lambda}}_{_{(1,3,\ldots,4n-3,4n-2,4n-1,4n+2),(1,3,\ldots,4n-3,4n,4n+1,4n+2),(2,4,\ldots,4n-4,4n-2,4n),(2,4,\ldots,4n-4,4n-1,4n+1)}}\\
			&+&\text{YT}^{^{4\lambda}}_{_{(1,3,\ldots,4n-3,4n-2,4n-1,4n,4n+1,4n+2),(1,3,\ldots,4n-3,4n+2),(2,4,\ldots,4n-4,4n-2,4n),(2,4,\ldots,4n-4,4n-1,4n+1)}}.
		\end{matrix}$
		
		Since we are working in the Schubert variety $X(v_6),$ by using \cref{thm2.3} and \cref{rmk2.5} we have
		
		$\text{YT}^{^{2\lambda}}_{_{(1,3,\ldots,4n-3,4n+2),(2,4,\ldots,4n-4,4n-2,4n)}}=\text{YT}^{^{2\lambda}}_{_{(1,3,\ldots,4n-3,4n),(2,4,\ldots,4n-4,4n-2,4n+2)}}.$
		
		Therefore,\\
		$\begin{matrix}
			X_2X_3&=\text{YT}^{^{4\lambda}}_{_{(1,3,\ldots,4n-3,4n-2,4n,4n+2),(1,3,\ldots,4n-3,4n-1,4n+1,4n+2),(2,4,\ldots,4n-4,4n-2,4n),(2,4,\ldots,4n-4,4n-1,4n+1)}}\\
			&-\text{YT}^{^{4\lambda}}_{_{(1,3,\ldots,4n-3,4n-2,4n-1,4n+2),(1,3,\ldots,4n-3,4n,4n+1,4n+2),(2,4,\ldots,4n-4,4n-2,4n),(2,4,\ldots,4n-4,4n-1,4n+1)}}\\
			&+\text{YT}^{^{4\lambda}}_{_{(1,3,\ldots,4n-3,4n-2,4n-1,4n,4n+1,4n+2),(1,3,\ldots,4n-3,4n),(2,4,\ldots,4n-4,4n-2,4n+2),(2,4,\ldots,4n-4,4n-1,4n+1)}}.
		\end{matrix}$
		
		We have the following straightening law in $G/P^{\alpha_{4n+2}}.$
		
		$\begin{matrix}\text{YT}^{^{2\lambda}}_{_{(2,4,\ldots,4n-4,4n-2,4n+2),(2,4,\ldots,4n-4,4n-1,4n+1)}}
			&=\text{YT}^{^{2\lambda}}_{_{(2,4,\ldots,4n-4,4n-2,4n+1),(2,4,\ldots,4n-4,4n-1,4n+2)}}\\
			&-\text{YT}^{^{2\lambda}}_{_{(2,4,\ldots,4n-4,4n-2,4n-1),(2,4,\ldots,4n-4,4n+1,4n+2)}}\\
			&+\text{YT}^{^{2\lambda}}_{_{(2,4,\ldots,4n-4,4n-2,4n-1,4n+1,4n+2),(2,4,\ldots,4n-4)}}.
		\end{matrix}$

		Since we are working in the Schubert variety $X(v_6)$, above straightening law becomes
		
		\begin{equation}\label{eq3.3}
			\text{YT}^{^{2\lambda}}_{_{(2,4,\ldots,4n-4,4n-2,4n+2),(2,4,\ldots,4n-4,4n-1,4n+1)}}=\text{YT}^{^{2\lambda}}_{_{(2,4,\ldots,4n-4,4n-2,4n+1),(2,4,\ldots,4n-4,4n-1,4n+2)}}.
		\end{equation}
		
		Therefore, we have \\
		$\begin{matrix}
			X_2X_3&=\text{YT}^{^{2\lambda}}_{_{(1,3,\ldots,4n-3,4n-2,4n,4n+2),(1,3,\ldots,4n-3,4n-1,4n+1,4n+2),(2,4,\ldots,4n-4,4n-2,4n),(2,4,\ldots,4n-4,4n-1,4n+1)}}\\
			&-\text{YT}^{^{2\lambda}}_{_{(1,3,\ldots,4n-3,4n-2,4n-1,4n+2),(1,3,\ldots,4n-3,4n,4n+1,4n+2),(2,4,\ldots,4n-4,4n-2,4n),(2,4,\ldots,4n-4,4n-1,4n+1)}}\\
			&+\text{YT}^{^{2\lambda}}_{_{(1,3,\ldots,4n-3,4n-2,4n-1,4n,4n+1,4n+2),(1,3,\ldots,4n-3,4n),(2,4,\ldots,4n-4,4n-2,4n+1),(2,4,\ldots,4n-4,4n-1,4n+2)}}
		\end{matrix}$\\
		
		$\hspace{1cm}=X_1X_4-Y_1+Y_3.$

		$(ii):$ Note that
		$$X_4X_5=\text{YT}^{^{4\lambda}}_{_{(1,3,\ldots,4n-3,4n-2,4n,4n+2),(1,3,\ldots,4n-3,4n-1,4n,4n+1),(2,4,\ldots,4n-4,4n-2,4n+2),(2,4,\ldots,4n-4,4n-1,4n+1)}}.$$
		\normalsize{By} using \cref{thm2.3} and \cref{rmk2.5}, the following straightening law holds in $G/P^{\alpha_{4n+2}}.$ 
		\begin{equation}\label{eq3.4}
			\begin{matrix}\text{YT}^{^{2\lambda}}_{_{(1,3,\ldots,4n-3,4n-2,4n,4n+2),(1,3,\ldots,4n-3,4n-1,4n,4n+1)}}
				&=\text{YT}^{^{2\lambda}}_{_{(1,3,\ldots,4n-3,4n-2,4n,4n+1),(1,3,\ldots,4n-3,4n-1,4n,4n+2)}}\\
				&-\text{YT}^{^{2\lambda}}_{_{(1,3,\ldots,4n-3,4n-2,4n-1,4n),(1,3,\ldots,4n-3,4n,4n+1,4n+2)}}\\
				&+\text{YT}^{^{2\lambda}}_{_{(1,3,\ldots,4n-3,4n-2,4n-1,4n,4n+1,4n+2),(1,3,\ldots,4n-3,4n)}}.
			\end{matrix}
		\end{equation}
		\normalsize{Therefore,} by using \cref{eq3.3} and \cref{eq3.4} we have
		
		$\begin{matrix}
			X_4X_5&=\text{YT}^{^{4\lambda}}_{_{(1,3,\ldots,4n-3,4n-2,4n,4n+1),(1,3,\ldots,4n-3,4n-1,4n,4n+2),(2,4,\ldots,4n-4,4n-2,4n+1),(2,4,\ldots,4n-4,4n-1,4n+2)}}\\
			&-\text{YT}^{^{4\lambda}}_{_{(1,3,\ldots,4n-3,4n-2,4n-1,4n),(1,3,\ldots,4n-3,4n,4n+1,4n+2),(2,4,\ldots,4n-4,4n-2,4n+1),(2,4,\ldots,4n-4,4n-1,4n+2)}}\\
			&+\text{YT}^{^{4\lambda}}_{_{(1,3,\ldots,4n-3,4n-2,4n-1,4n,4n+1,4n+2),(1,3,\ldots,4n-3,4n),(2,4,\ldots,4n-4,4n-2,4n+1),(2,4,\ldots,4n-4,4n-1,4n+2)}}\\
		\end{matrix}$
		
		$\hspace{1cm}=X_3X_6-Y_4+Y_3.$
		
		$(iii):$ Note that\\
		$X_2X_5=\text{YT}^{^{4\lambda}}_{_{(1,3,\ldots,4n-3,4n-2,4n+1,4n+2),(1,3,\ldots,4n-3,4n-1,4n,4n+1),(2,4,\ldots,4n-4,4n-2,4n+2),(2,4,\ldots,4n-4,4n-1,4n)}}.$
		
		By using \cref{thm2.3} and \cref{rmk2.5}, the following straightening law holds in $G/P^{\alpha_{4n+2}}.$
		
		$\begin{matrix}\text{YT}^{^{2\lambda}}_{_{(1,3,\ldots,4n-3,4n-2,4n+1,4n+2),(1,3,\ldots,4n-3,4n-1,4n,4n+1)}}\\
			=\text{YT}^{^{2\lambda}}_{_{(1,3,\ldots,4n-3,4n-2,4n,4n+1),(1,3,\ldots,4n-3,4n-1,4n+1,4n+2)}}\\
			-\text{YT}^{^{2\lambda}}_{_{(1,3,\ldots,4n-3,4n-2,4n-1,4n+1),(1,3,\ldots,4n-3,4n,4n+1,4n+2)}}\\
			+\text{YT}^{^{2\lambda}}_{_{(1,3,\ldots,4n-3,4n-2,4n-1,4n,4n+1,4n+2),(1,3,\ldots,4n-3,4n+1)}}
		\end{matrix}$
		
		and
		
		$\begin{matrix}\text{YT}^{^{2\lambda}}_{_{(2,4,\ldots,4n-4,4n-2,4n+2),(2,4,\ldots,4n-4,4n-1,4n)}}
			&=\text{YT}^{^{2\lambda}}_{_{(2,4,\ldots,4n-4,4n-2,4n),(2,4,\ldots,4n-4,4n-1,4n+2)}}\\
			&-\text{YT}^{^{2\lambda}}_{_{(2,4,\ldots,4n-4,4n-2,4n-1),(2,4,\ldots,4n-4,4n,4n+2)}}\\
			&+\text{YT}^{^{2\lambda}}_{_{(2,4,\ldots,4n-4,4n-2,4n-1,4n,4n+2),(2,4,\ldots,4n-4)}}.
		\end{matrix}$ 
		
		Since we are working in the Schubert variety $X(v_6)$, the above straightening law becomes 
		
		$\text{YT}^{^{2\lambda}}_{_{(2,4,\ldots,4n-4,4n-2,4n+2),(2,4,\ldots,4n-4,4n-1,4n)}}=\text{YT}^{^{2\lambda}}_{_{(2,4,\ldots,4n-4,4n-2,4n),(2,4,\ldots,4n-4,4n-1,4n+2)}}.$
		
		Therefore, by using the above straightening laws we have
		
		$\begin{matrix}
			X_2X_5&=\text{YT}^{^{4\lambda}}_{_{(1,3,\ldots,4n-3,4n-2,4n,4n+1),(1,3,\ldots,4n-3,4n-1,4n+1,4n+2),(2,4,\ldots,4n-4,4n-2,4n),(2,4,\ldots,4n-4,4n-1,4n+2)}}\\
			&-\text{YT}^{^{4\lambda}}_{_{(1,3,\ldots,4n-3,4n-2,4n-1,4n+1),(1,3,\ldots,4n-3,4n,4n+1,4n+2),(2,4,\ldots,4n-4,4n-2,4n),(2,4,\ldots,4n-4,4n-1,4n+2)}}\\
			&+\text{YT}^{^{4\lambda}}_{_{(1,3,\ldots,4n-3,4n-2,4n-1,4n,4n+1,4n+2),(1,3,\ldots,4n-3,4n+1),(2,4,\ldots,4n-4,4n-2,4n),(2,4,\ldots,4n-4,4n-1,4n+2)}}.\\
		\end{matrix}$
		
		Since we are working in the Schubert variety $X(v_{6})$, by using \cref{thm2.3} and \cref{rmk2.5} we have
		
		$\text{YT}^{^{2\lambda}}_{_{(1,3,\ldots,4n-3,4n+1),(2,4,\ldots,4n-4,4n-2,4n)}}=\text{YT}^{^{2\lambda}}_{_{(1,3,\ldots,4n-3,4n),(2,4,\ldots,4n-4,4n-2,4n+1)}}.$
		
		Therefore,  we have
		
		$\begin{matrix}
			X_2X_5&=\text{YT}^{^{4\lambda}}_{_{(1,3,\ldots,4n-3,4n-2,4n,4n+1),(1,3,\ldots,4n-3,4n-1,4n+1,4n+2),(2,4,\ldots,4n-4,4n-2,4n),(2,4,\ldots,4n-4,4n-1,4n+2)}}\\
			&-\text{YT}^{^{4\lambda}}_{_{(1,3,\ldots,4n-3,4n-2,4n-1,4n+1),(1,3,\ldots,4n-3,4n,4n+1,4n+2),(2,4,\ldots,4n-4,4n-2,4n),(2,4,\ldots,4n-4,4n-1,4n+2)}}\\
			&+\text{YT}^{^{4\lambda}}_{_{(1,3,\ldots,4n-3,4n-2,4n-1,4n,4n+1,4n+2),(1,3,\ldots,4n-3,4n),(2,4,\ldots,4n-4,4n-2,4n+1),(2,4,\ldots,4n-4,4n-1,4n+2)}}
		\end{matrix}$
		
		$\hspace{1cm}=X_1X_6-Y_2+Y_3.$
	\end{proof} 
	\begin{lemma}\label{lem5.3}
		The following relations in $X_i$'s $(1 \leq i \leq 6),$ $Y_j$'s ($1 \leq j \leq 4$) and $Z_1, Z_2$ hold in $R_3.$
		\begin{itemize}
			\item[(i)] $Z_1-X_2Y_4=0.$
			\item[(ii)] $Z_2-X_2Y_3=0.$
		\end{itemize}
	\end{lemma}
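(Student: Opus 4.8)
The plan is to treat both identities as equalities between products of Pfaffians and to reduce each one, exactly as in the proof of \cref{lem5.2}, to a single application of the straightening relation \cref{eq3.5}. Over the opposite big cell $SkM_{4n+2}$ every tableau is, by \cref{rmk2.4} and \cref{rmk2.5}, the product of the Pfaffians $q_{\underline i}$ attached to its rows, and multiplication of monomials corresponds to juxtaposing rows. Since $Z_1$ (resp. $Z_2$) is a $\text{YT}^{4\lambda}$ tableau times a $\text{YT}^{2\lambda}$ tableau and $X_2Y_4$ (resp. $X_2Y_3$) is a two-row tableau times a four-row tableau, each of the four objects is a product of six Pfaffians; to prove the relations it therefore suffices to match the six row-factors on the two sides.

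For (i) I would first write $X_2Y_4$ as the six-row product whose rows are the two rows of $X_2$ and the four rows of $Y_4$, namely $(1,3,\dots,4n-3,4n-2,4n+1,4n+2)$ and $(2,4,\dots,4n-4,4n-1,4n)$ from $X_2$, together with $(1,3,\dots,4n-3,4n-2,4n-1,4n)$, $(1,3,\dots,4n-3,4n,4n+1,4n+2)$, $(2,4,\dots,4n-4,4n-2,4n+1)$ and $(2,4,\dots,4n-4,4n-1,4n+2)$ from $Y_4$. Comparing with the six rows of $Z_1$, four of the rows agree on the two sides; the only discrepancy is that $X_2Y_4$ carries the pair $(2,4,\dots,4n-4,4n-1,4n)$, $(2,4,\dots,4n-4,4n-2,4n+1)$ exactly where $Z_1$ carries $(2,4,\dots,4n-4,4n-2,4n)$, $(2,4,\dots,4n-4,4n-1,4n+1)$. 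These two pairs are precisely the two sides of \cref{eq3.5}, so multiplying \cref{eq3.5} by the common product of the remaining four Pfaffians yields $X_2Y_4=Z_1$ in $R_3$.

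For (ii) I would proceed identically: writing $X_2Y_3$ as a six-row product, the rows $(1,3,\dots,4n-3,4n-2,4n-1,4n,4n+1,4n+2)$, $(1,3,\dots,4n-3,4n)$, $(1,3,\dots,4n-3,4n-2,4n+1,4n+2)$ and $(2,4,\dots,4n-4,4n-1,4n+2)$ coincide with four rows of $Z_2$, while the leftover pair of $X_2Y_3$ is again $(2,4,\dots,4n-4,4n-1,4n)$, $(2,4,\dots,4n-4,4n-2,4n+1)$ and the leftover pair of $Z_2$ is again $(2,4,\dots,4n-4,4n-2,4n)$, $(2,4,\dots,4n-4,4n-1,4n+1)$. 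Applying \cref{eq3.5} once more and multiplying by the four common Pfaffians gives $X_2Y_3=Z_2$.

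The routine but delicate part is the row bookkeeping: one must verify that exactly one incomparable pair of rows distinguishes the two sides in each case and that all other rows literally coincide, so that the problem collapses to the single quadratic relation \cref{eq3.5}. The only conceptual point to keep in mind is that \cref{eq3.5} is an identity on the Schubert variety $X(v_6)$ rather than on all of $G/P$: in the full Pfaffian straightening law coming from \cref{thm2.3} the two extra terms vanish upon restriction to $X(v_6)$ because their rows exceed $v_6$, and it is precisely this restricted form that makes the clean equalities $Z_1=X_2Y_4$ and $Z_2=X_2Y_3$ hold.
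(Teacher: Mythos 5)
Your proof is correct and matches the paper's own argument: the paper likewise rewrites $X_2Y_4$ (resp.\ $X_2Y_3$) as the six-row product of Pfaffians, observes that it differs from $Z_1$ (resp.\ $Z_2$) only in the pair of rows forming the two sides of \cref{eq3.5}, and concludes by one application of that restricted straightening law. Your remark that \cref{eq3.5} holds only on $X(v_6)$ (the extra terms of the full straightening law from \cref{thm2.3} vanish there) is exactly the point the paper relies on as well.
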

	\begin{proof} (i): Note that\\ 
		$X_2Y_4=\text{YT}^{^{4\lambda}}_{_{(1,3,\ldots,4n-3,4n-2,4n-1,4n),(1,3,\ldots,4n-3,4n-2,4n+1,4n+2),(1,3,\ldots,4n-3,4n,4n+1,4n+2),(2,4,\ldots,4n-4,4n-2,4n+1)}}\\
		\text{YT}^{^{2\lambda}}_{_{(2,4,\ldots,4n-4,4n-1,4n),(2,4,\ldots,4n-4,4n-1,4n+2)}}.$
		
		Recall that $\text{YT}^{^{2\lambda}}_{_{(2,4,\ldots,4n-4,4n-2,4n+1),(2,4,\ldots,4n-4,4n-1,4n)}}=\text{YT}^{^{2\lambda}}_{_{(2,4,\ldots,4n-4,4n-2,4n),(2,4,\ldots,4n-4,4n-1,4n+1)}}$ (see \cref{eq3.5}). Therefore,	we have $X_2Y_4=Z_1.$ 
		
		(ii): Note that\\ 
		$X_2Y_3=\text{YT}^{^{4\lambda}}_{_{(1,3,\ldots,4n-3,4n-2,4n-1,4n,4n+1,4n+2),(1,3,\ldots,4n-3,4n-2,4n+1,4n+2),(1,3,\ldots,4n-3,4n),(2,4,\ldots,4n-4,4n-2,4n+1)}}\\
		\text{YT}^{^{2\lambda}}_{_{(2,4,\ldots,4n-4,4n-1,4n),(2,4,\ldots,4n-4,4n-1,4n+2)}}.$
		
		Recall that $\text{YT}^{^{2\lambda}}_{_{(2,4,\ldots,4n-4,4n-2,4n+1),(2,4,\ldots,4n-4,4n-1,4n)}}=\text{YT}^{^{2\lambda}}_{_{(2,4,\ldots,4n-4,4n-2,4n),(2,4,\ldots,4n-4,4n-1,4n+1)}}$ (see \cref{eq3.5}). Therefore, we have $X_2Y_3=Z_2.$
	\end{proof}
	\begin{lemma}\label{lemma5.4}
		$R$ is generated by $X_i,$ $(1 \leq i \leq 6)$ and $Y_3$ as an $\mathbb{C}$-algebra.
	\end{lemma}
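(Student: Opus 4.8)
The plan is to combine the three previous lemmas by a straightforward elimination of generators. By \cref{lem5.1}, the ring $R$ is already known to be generated as a $\mathbb{C}$-algebra by the finite list $X_1,\dots,X_6$, $Y_1,\dots,Y_4$, $Z_1$, and $Z_2$. Hence it suffices to show that each of $Y_1$, $Y_2$, $Y_4$, $Z_1$, and $Z_2$ lies in the $\mathbb{C}$-subalgebra $S$ generated by $X_1,\dots,X_6$ and $Y_3$; the generator $Y_3$ is retained because no relation at our disposal expresses it through the $X_i$ alone.

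First I would rewrite the three relations of \cref{lem5.2} to solve for $Y_1$, $Y_2$, and $Y_4$:
\begin{align*}
	Y_1 &= X_1X_4 - X_2X_3 + Y_3,\\
	Y_2 &= X_1X_6 - X_2X_5 + Y_3,\\
	Y_4 &= X_3X_6 - X_4X_5 + Y_3.
\end{align*}
Each right-hand side is a polynomial in $X_1,\dots,X_6,Y_3$, so $Y_1,Y_2,Y_4 \in S$ immediately.

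Next I would invoke \cref{lem5.3}. Its second relation gives $Z_2 = X_2 Y_3 \in S$ directly. Its first relation gives $Z_1 = X_2 Y_4$; substituting the expression for $Y_4$ just obtained yields $Z_1 = X_2(X_3X_6 - X_4X_5 + Y_3)$, again a polynomial in $X_1,\dots,X_6,Y_3$, so $Z_1 \in S$.

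Combining these observations, every generator in the list furnished by \cref{lem5.1} lies in $S$, whence $R = S$ is generated by $X_1,\dots,X_6$ and $Y_3$. There is no genuine obstacle at this stage: the substantive work was already carried out in the straightening-law computations proving \cref{lem5.1}, \cref{lem5.2}, and \cref{lem5.3}, and this final step is purely the formal elimination of $Y_1,Y_2,Y_4,Z_1,Z_2$. The only point worth recording is that $Y_3$ itself cannot be removed, since none of the available relations expresses it as a polynomial in the $X_i$; thus $X_1,\dots,X_6,Y_3$ is the generating set that this chain of lemmas naturally produces.
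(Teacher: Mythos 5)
Your proof is correct and is exactly the argument the paper intends: the paper's own proof of this lemma is the one-line statement that it ``follows from \cref{lem5.1}, \cref{lem5.2}, and \cref{lem5.3},'' and your elimination of $Y_1, Y_2, Y_4$ via the relations of \cref{lem5.2} and of $Z_1, Z_2$ via \cref{lem5.3} is precisely the spelled-out version of that deduction. (Your closing remark that $Y_3$ cannot be eliminated is a reasonable observation but is not needed, since the lemma asserts only a generating set, not a minimal one.)
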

	\begin{proof}
		Follows from \cref{lem5.1}, \cref{lem5.2}, and \cref{lem5.3}.
	\end{proof}
	
	Recall that by \cref{thm2.1}, the very ample line bundle $\mathcal{L}(4\lambda)$ descends to a line bundle on the GIT quotient $T \backslash \backslash (G/P^{\alpha_{4n+2}})^{ss}_{T}(\mathcal{L}(4\lambda)).$ We prove that the GIT quotient $T \backslash \backslash (X(v_6))^{ss}_{T}(\mathcal{L}(4\lambda))$ is projectively normal with respect to the descent of the $T$-linearized very ample line bundle $\mathcal{L}(4\lambda).$
	
	\begin{theorem}\label{thm5.1}
		The homogeneous coordinate ring of $T \backslash \backslash (X(v_6))^{ss}_{T}(\mathcal{L}(4\lambda))$ is generated by elements of degree one. 
	\end{theorem}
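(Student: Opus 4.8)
The plan is to reduce the statement to the generation result already recorded in \cref{lemma5.4} together with a short parity bookkeeping, so the first task is to pin down exactly which graded ring is being asserted to be generated in degree one. Since $\mathcal{L}(4\lambda)=\mathcal{L}(2\lambda)^{\otimes 2}$, the $T$-semistable loci for $\mathcal{L}(2\lambda)$ and $\mathcal{L}(4\lambda)$ coincide, so the underlying quotient variety is again $X=\mathrm{Proj}(R)$. By \cref{thm2.1} the bundle $\mathcal{L}(4\lambda)$ descends, and for every $k\ge 0$ the sections of its descent are
$$H^0(X(v_6),\mathcal{L}(4\lambda)^{\otimes k})^{T}=H^0(X(v_6),\mathcal{L}^{\otimes 2k}(2\lambda))^{T}=R_{2k}.$$
Hence the homogeneous coordinate ring of $T\backslash\backslash(X(v_6))^{ss}_{T}(\mathcal{L}(4\lambda))$ with respect to the descended polarization is the even Veronese subring $S:=\bigoplus_{k\ge 0}R_{2k}$ of $R$, graded so that its degree-one piece is $S_1=R_2$. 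With this identification the theorem is equivalent to the assertion $R_{2k}=(R_2)^{k}$ for all $k\ge 1$.

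Next I would feed in \cref{lemma5.4}, which states that $R$ is generated as a $\mathbb{C}$-algebra by $X_1,\dots,X_6\in R_1$ together with $Y_3\in R_2$. Consequently any element of $R_{2k}$ is a $\mathbb{C}$-linear combination of monomials $X_{i_1}X_{i_2}\cdots X_{i_p}\,Y_3^{\,q}$ with $i_1,\dots,i_p\in\{1,\dots,6\}$ and $p+2q=2k$. The one genuine observation is a parity argument: because $p+2q=2k$ is even, $p=2(k-q)$ is even, so I would group the $X$-factors into $k-q$ consecutive pairs $(X_{i_1}X_{i_2}),(X_{i_3}X_{i_4}),\dots$, each of which lies in $R_1\cdot R_1\subseteq R_2=S_1$, while each of the $q$ factors $Y_3$ already lies in $R_2=S_1$. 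This exhibits the monomial as a product of $(k-q)+q=k$ elements of $S_1$, i.e. as an element of $(S_1)^{k}$. Summing over monomials yields $R_{2k}\subseteq(R_2)^{k}$; the reverse inclusion is immediate, so $R_{2k}=(R_2)^{k}$ and $S$ is generated in degree one.

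The substantive difficulty of the whole argument lies entirely in the preceding lemmas rather than in this final step. The hard part is \cref{lem5.1}, the generation of $R$ by the explicit tableaux $X_i,Y_j,Z_1,Z_2$, which rests on a long case analysis on the shape of the bottom row $row_{2k}$ of a $Spin(8n+4,\mathbb{C})$-standard tableau together with the counting constraint \eqref{eq12.2}; and the straightening relations of \cref{lem5.2,lem5.3}, obtained from the Pfaffian identities of \cref{thm2.3} and \cref{rmk2.4,rmk2.5}, which let one eliminate $Y_1,Y_2,Y_4,Z_1,Z_2$ in favour of the degree-one generators $X_i$ and the single degree-two generator $Y_3$. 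Granting those inputs, the present theorem is purely formal; the only point requiring care is the correct matching of gradings—recognizing through \cref{thm2.1} that the descended polarization $\mathcal{L}(4\lambda)$ confines us to the even part $\bigoplus_k R_{2k}$ with $R_2$ as degree one—after which the evenness of the number of $X_i$-factors occurring in any element of $R_{2k}$ is precisely what makes the pairing possible.
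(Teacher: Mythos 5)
Your proposal is correct and follows essentially the same route as the paper: both invoke \cref{lemma5.4} to write any element of $H^{0}(X(v_6),\mathcal{L}^{\otimes k}(4\lambda))^{T}=R_{2k}$ as a combination of monomials $X^{\underline{m}}Y_3^{n_1}$ with $|\underline{m}|+2n_1=2k$, then use the evenness of $|\underline{m}|$ to pair the $X_i$-factors into elements of $R_2$, exhibiting each monomial as a product of $k$ degree-one elements. Your explicit identification of the coordinate ring with the even Veronese subring $\bigoplus_k R_{2k}$ is just a more careful statement of what the paper does implicitly.
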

	\begin{proof}
		Let $f \in H^{0}(X(v_6), \mathcal{L}^{\otimes k}(4\lambda))^{T}=H^{0}(X(v_6), \mathcal{L}^{\otimes 2k}(2\lambda))^{T}.$ Then by \cref{lemma5.4}, we have $$f = \sum {a_{\underline{m},n_1}}X^{\underline{m}}Y_3^{n_1},$$ where $\underline{m}=(m_1,m_2,m_3,m_4,m_5,m_6) \in \mathbb{Z}_{\geq 0}^6$ and $n_1 \in \mathbb{Z}_{\ge 0}$ such that $m_1+m_2+m_3+m_4+m_5+m_6+2n_1=2k,$ $X^{\underline{m}}$ denotes $X_1^{m_1}X_2^{m_2}X_3^{m_3}X_4^{m_4}X_5^{m_5}X_6^{m_6}$ and $a_{\underline{m}, n_1}$'s are non-zero scalars.
		
		Now to prove that the homogeneous coordinate ring of $T \backslash \backslash (X(v_6))^{ss}_{T}(\mathcal{L}(4\lambda))$ is generated by $H^0(X(v_6), \mathcal{L}(4\lambda))^{T}$ as a $\mathbb{C}$-algebra, it is enough to show that for each $f$ as above and each monomial appearing in the expression of $f$ is a product of $k$ elements of $H^0(X(v_6), \mathcal{L}(4\lambda))^{T}.$
		
		Consider the monomial $X^{\underline{m}}Y_{3}^{n_1}$ in the expression of $f.$ Note that $m_1+m_2+m_3+m_4+m_5+m_6$ is an even integer. Thus, $X^{\underline{m}}$ can be written as $\prod_{(i,j)}X_iX_j,$ where the number of pairs $(i,j)$ is $k-n_1$ and repetitions of $X_i$'s are allowed. Therefore, $X^{\underline{m}}$ can be written as product of $k-n_1$ number of monomials in $H^0(X(v_6), \mathcal{L}(4\lambda))^{T}.$ On the other hand, since $Y_3\in H^0(X(v_6), \mathcal{L}(4\lambda))^{T},$ $X^{\underline{m}}Y_{3}^{n_1}$ is a product of $k$ elements of $H^0(X(v_6), \mathcal{L}(4\lambda))^{T}.$
	\end{proof}
	
	\begin{proof}[Proof of \cref{cor3.8}]
		
		Since $X(v_6)$ is normal, the variety ${T} \backslash \backslash (X(v_6))^{ss}_{T}(\mathcal{L}(4\lambda))$ is normal.  Therefore, by using \cref{thm5.1}, it follows that $T \backslash \backslash (X(v_6))^{ss}_{T}(\mathcal{L}(4\lambda))$ is projectively normal with respect to the descent of the $T$-linearized very ample line bundle $\mathcal{L}(4\lambda).$
	\end{proof}

	\begin{corollary}\label{cor5.4}
		Let $v \in W^{P^{\alpha_{4n+2}}}$ be such that $v_1 \leq v\le v_{6}.$ Then the GIT quotient $T \backslash \backslash (X(v))^{ss}_{T}\\(\mathcal{L}(4\lambda))$ is projectively normal  with respect to the descent of the $T$-linearized very ample line bundle $\mathcal{L}(4\lambda).$ 
	\end{corollary}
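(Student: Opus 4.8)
The plan is to reduce the statement to the six distinguished elements by first pinning down the relevant Bruhat interval. I would begin by checking that
$$
\{\, v \in W^{P} : v_1 \le v \le v_6 \,\} = \{v_1, v_2, v_3, v_4, v_5, v_6\},
$$
i.e.\ that Figure~1 is the full Hasse diagram of this interval. This is a finite combinatorial verification: using the order-isomorphism $(W^{Q}, \le) \cong (I_{4n+2,8n+4}, \le)$, one compares the one-line notations listed at the start of \cref{section3}. Any $v$ in the interval has its defining tuple squeezed entrywise between those of $v_1$ and $v_6$, which differ only in a bounded block of middle positions; imposing the admissibility constraint for $W^{P}$ (the parity condition on entries exceeding $4n+2$ coming from the isotropy condition) leaves precisely the six tuples above. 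Equivalently, since $\ell(v_6) - \ell(v_1) = 3$ and the covering relations drawn in Figure~1 already connect all six elements, there is no room for additional members of the interval.

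Once this is established, the corollary follows by treating each $v_i$ separately. For $v = v_6$ the assertion is exactly \cref{cor3.8} (obtained via \cref{thm5.1}), so nothing further is needed. For $v = v_1, \ldots, v_5$ I would invoke \cref{prop1.1}, which identifies each quotient $T \backslash \backslash (X(v_i))^{ss}_{T}(\mathcal{L}(4\lambda))$ as a polarized variety: a single point for $v_1$, and $(\mathbb{P}^{n_i}, \mathcal{O}_{\mathbb{P}^{n_i}}(2))$ for $v_2, v_3, v_5, v_4$ with $n_i = 1, 1, 2, 3$ respectively. The remaining input is the classical fact that $(\mathbb{P}^n, \mathcal{O}_{\mathbb{P}^n}(d))$ is projectively normal for every $d \ge 1$: under the $d$-uple embedding the homogeneous coordinate ring is the $d$-th Veronese subring of a polynomial ring, which is generated in degree one by construction and is integrally closed, being the intersection of the normal ring $\mathbb{C}[x_0, \ldots, x_n]$ with its own fraction field inside the larger one. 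A point with any very ample polarization is trivially projectively normal. Applying this with $d = 2$ to $v_2, \ldots, v_5$ and the trivial case to $v_1$ shows that every quotient in the interval is projectively normal.

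As for difficulty, the only substantive point is the combinatorial identification $[v_1, v_6] = \{v_1, \ldots, v_6\}$ of the previous paragraph; the projective-normality input for the Veronese is entirely standard, and the per-element reduction is immediate once \cref{cor3.8} and \cref{prop1.1} are in hand. I therefore expect no genuine obstacle beyond making the interval computation precise, after which the result is assembled from the already-proven cases.
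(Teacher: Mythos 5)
You have a genuine circularity problem. For $v = v_2, \ldots, v_5$ your proof rests on \cref{prop1.1}, but in this paper \cref{prop1.1}(ii)--(v) is itself proved \emph{using} \cref{cor5.4}: for each of $v_2, v_4, v_5$ (and $v_3$ by the same argument as $v_2$) the paper first cites \cref{cor5.4} to know that the quotient is projectively normal, and it is exactly this degree-one generation that licenses the surjection from a polynomial ring in the degree-one invariants onto $\bigoplus_{k \geq 0} H^{0}(X(v_i), \mathcal{L}^{\otimes k}(4\lambda))^{T}$, after which the diamond-lemma computation identifies the quotient with a Veronese variety. So \cref{prop1.1} sits logically downstream of \cref{cor5.4}, and you cannot invoke it to prove \cref{cor5.4} without first supplying an independent proof of \cref{prop1.1}; any such proof would have to establish precisely the degree-one generation that is at issue here. (Your appeals to \cref{cor3.8} for $v = v_6$ and to \cref{prop1.1}(i) for $v = v_1$ are unobjectionable, since those do not depend on \cref{cor5.4}, and your Veronese input --- that $(\mathbb{P}^n, \mathcal{O}_{\mathbb{P}^n}(2))$ is projectively normal --- is of course standard.)

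The paper's own proof is shorter and sidesteps both of your substantive steps. Since restriction of sections between Schubert varieties is surjective and $T$ is linearly reductive, the restriction map $H^{0}(X(v_6), \mathcal{L}^{\otimes k}(4\lambda))^{T} \longrightarrow H^{0}(X(v), \mathcal{L}^{\otimes k}(4\lambda))^{T}$ is surjective for every $k \geq 1$; hence the homogeneous coordinate ring of $T \backslash\backslash (X(v))^{ss}_{T}(\mathcal{L}(4\lambda))$ is a quotient of that of $T \backslash\backslash (X(v_6))^{ss}_{T}(\mathcal{L}(4\lambda))$, which is generated in degree one by \cref{thm5.1}; combined with normality of the Schubert variety $X(v)$, and hence of its GIT quotient, this yields projective normality for \emph{every} $v$ in the interval at once. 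In particular the identification $[v_1, v_6] = \{v_1, \ldots, v_6\}$ --- which is in fact correct, and which your entrywise comparison in $I_{4n+2,8n+4}$ does verify, whereas your alternative argument that ``$\ell(v_6) - \ell(v_1) = 3$ leaves no room'' is not rigorous, since a length-three Bruhat interval may have more than two elements in each intermediate rank --- is never needed. If you wish to salvage your structure, replace the appeal to \cref{prop1.1} by this restriction argument; but then the case division and the interval computation become superfluous.
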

	
	\begin{proof}
		Since $T$ is linearly reductive, the restriction map $$\phi: H^{0}(X(v_{6}), \mathcal{L}^{\otimes k}(4\lambda))^{T} \longrightarrow H^{0}(X(v), \mathcal{L}^{\otimes k}(4\lambda))^{T}$$ is surjective for all $k \geq 1$.
		
		So, by \cref{cor3.8}, $T \backslash \backslash (X(v))^{ss}_{T}(\mathcal{L}(4\lambda))$ is projectively normal with respect to the descent of the $T$-linearized very ample line bundle $\mathcal{L}(4\lambda).$
	\end{proof}
	
	\begin{proof}[Proof of \cref{prop1.1}] Note that $Y_{3}=0$ on $X(v_{i})$ for all $1 \leq i \leq 5.$

		(i):  Since $T$ is linearly reductive, the restriction map $$\phi: H^{0}(X(v_6), \mathcal{L}^{\otimes k}(4\lambda))^{T} \\ \longrightarrow H^{0}(X(v_{1}), \mathcal{L}^{\otimes k}(4\lambda))^{T}$$ is surjective for all $k \geq 1$. Thus, by \cref{lemma5.4}, any standard monomial in $H^{0}(X(v_1),\mathcal{L}^{\otimes k}(4\lambda))^{T}$ is of the form $X_1^{2k}.$ Hence, $T\backslash\backslash(X(v_1))^{ss}_{T}(\mathcal{L}(4\lambda))=Proj(\mathbb{C}[X_1^2]).$ 
		
		(ii):  Since $T$ is linearly reductive, the restriction map $$\phi: H^{0}(X(v_6), \mathcal{L}^{\otimes k}(4\lambda))^{T} \longrightarrow H^{0}(X(v_{2}), \mathcal{L}^{\otimes k}(4\lambda))^{T}$$ is surjective for all $k \geq 1$. Thus, by \cref{lemma5.4}, $X_1^2, X_1X_2, X_2^2$ are standard monomials in $H^{0}(X(v_2),\mathcal{L}(4\lambda))^{T}.$ 
		
		Note that by \cref{cor5.4}, $T \backslash \backslash (X(v_2))^{ss}_{T}(\mathcal{L}(4\lambda))$ is projectively normal. Therefore, there is a surjective homomorphism of $\mathbb{C}$-algebras $$\phi: \mathbb{C}[z_0,z_1,z_2] \longrightarrow \oplus_{k \in \mathbb{Z}_{\geq 0}}H^{0}(X(v_2), \mathcal{L}^{\otimes k}(4\lambda))^{T},$$ defined by
		$$(z_0, z_1, z_2) \mapsto (X_1^2, X_1X_2, X_2^2).$$ 
		
		Let $I$ be the ideal of $\mathbb{C}[z_0,z_1,z_2]$ generated by the relation
		\begin{equation}\label{5.1}
			z_0z_2=z_1^2.
		\end{equation} 
		
		Clearly, $I \subseteq \ker\phi$ and $\phi$ induces a homomorphism of $\mathbb{C}$-algebras $$\tilde{\phi}: \mathbb{C}[z_0,z_1,z_2]/I \longrightarrow \oplus_{k \in \mathbb{Z}_{\geq 0}}H^{0}(X(v_2), \mathcal{L}^{\otimes k}(4\lambda))^{T}.$$ Now, we show that $\tilde{\phi}$ is an isomorphism. To complete the proof we use  \eqref{5.1} as a reduction system. The process is of replacing a monomial $M$ in $z_i$'s which is divisible by a term $L_i$'s on the left hand side of the reduction rule $L_i=R_i$ by $(M/L_i)R_i,$ where $R_i$ is the right hand side of the reduction rule. We show that the diamond lemma of ring theory holds for this reduction system (see \cite{bergman}). That is any monomial in $z_i$'s reduces after applying the reduction rule to a unique expression in $z_i$'s, in which no term is divisible by a term appearing on the left hand side of the above reduction system.
		
		Since we have only one reduction rule, it is enough to check for $z_0z_1z_2.$ Note that $z_0z_1z_2=(z_0z_2)z_1=z_1^3,$ for which no further reduction is possible. Therefore, $\tilde{\phi}$ is an isomorphism. Hence, the GIT quotient  $T\backslash\backslash(X(v_2))^{ss}_{T}(\mathcal{L}(4\lambda))$ is isomorphic to $Proj(\frac{\mathbb{C}[z_0, z_1, z_2]}{(z_1^2-z_0z_2)}).$ Thus, the GIT quotient $T\backslash\backslash(X(v_2))^{ss}_{T}(\mathcal{L}(\lambda))$ is isomorphic to  $(\mathbb{P}^1, \mathcal{O}_{\mathbb{P}^1}(2))$ as a polarized variety.
		
		Proof of (iii) is similar to the proof of (ii).
		
		(iv):  Since $T$ is linearly reductive, the restriction map $$\phi: H^{0}(X(v_6), \mathcal{L}^{\otimes k}(4\lambda))^{T} \\ \longrightarrow H^{0}(X(v_{4}), \mathcal{L}^{\otimes k}(4\lambda))^{T}$$ is surjective for all $k \geq 1$. Thus, by \cref{lem5.1}, $X_1^2,$ $X_2^2,$ $X_3^2,$ $X_4^2,$ $X_1X_2,$ $X_1X_3,$ $X_1X_4,$ $X_2X_4,$ $X_3X_4,$ $Y_1$ are standard monomials in $H^0(X(v_4), \mathcal{L}(4\lambda))^{T}.$ Further, by \cref{lem5.2}, $X_2X_3=X_1X_4-Y_1.$  
		Note that by \cref{cor5.4}, $T \backslash \backslash (X(v_4))^{ss}_{T}(\mathcal{L}(4\lambda))$ is projectively normal. Let $A=\mathbb{C}[z_1,z_2,z_3,z_4,z_5,z_6,\\z_7,z_8,z_9,z_{10}]$ be the polynomial algebra with variables $z_i$'s ($1 \leq i \leq 10$). Then there is a surjective homomorphism $$\phi: A \rightarrow \oplus_{k \in \mathbb{Z}_{\geq 0}}H^{0}(X(v_4), \mathcal{L}^{\otimes k}(4\lambda))^{T},$$ defined by \begin{equation*} 
			\begin{split} (z_1, z_2, \ldots, z_{10}) \mapsto (X_1^2, X_2^2, X_3^2, X_4^2, X_1X_2, X_1X_3, X_1X_4, X_2X_3, X_2X_4, X_3X_4).
			\end{split}
		\end{equation*} 
		
		Let $I$ be the ideal of $A$ generated by the following relations:
		
		\begin{tabular}{ |p{2.5cm}|p{2.5cm}|p{2.5cm}|p{2.5cm}|p{2.5cm}| }
			\hline
			$z_1z_2=z_5^2$   & $z_1z_3=z_6^2$    & $z_1z_4=z_7^2$ &  $z_1z_8=z_5z_6$ &	$z_1z_9=z_5z_7$\\
			$z_1z_{10}=z_6z_7$  & $z_2z_3=z_8^2$   & $z_2z_4=z_9^2$ & $z_2z_6=z_5z_8$ & $z_2z_7=z_5z_9$ \\
			$z_2z_{10}=z_8z_9$ &  $z_3z_4=z_{10}^2$ &	$z_3z_5=z_6z_8$    & $z_3z_7=z_6z_{10}$ & $z_3z_9=z_8z_{10}$\\
			$z_4z_5=z_7z_9$ &	$z_4z_6=z_7z_{10}$ &   $z_4z_8=z_9z_{10}$ & $z_5z_{10}=z_6z_9$ & $z_6z_{9}=z_7z_8$\\
			\hline
		\end{tabular}.

		Clearly, $I \subseteq \ker\phi$ and $\phi$ induces a homomorphism of $\mathbb{C}$-algebras $$\tilde{\phi}: A/ I \longrightarrow\bigoplus\limits_{k \in \mathbb{Z}_{\geq 0}}H^{0}(X(v_4), \mathcal{L}^{\otimes k}(4\lambda))^T.$$ Now, we show that $\tilde{\phi}$ is an isomorphism. 
		
		To complete the proof we use the above relations as a reduction system. We show that diamond lemma (see \cite{bergman}) holds for this reduction system by looking at the reduction of the minimal overlapping ambiguities.
		
		Note that $z_1z_2z_3=(z_1z_2)z_3=z_3z_5^2 ~(\text{using } z_1z_2=z_5^2)=(z_3z_5)z_5=z_5z_6z_8 ~(\text{using } z_3z_5=z_6z_8).$ 
		
		Again, $z_1z_2z_3=z_1(z_2z_3)=z_1z_8^2 ~(\text{using } z_2z_3=z_8^2)=(z_1z_8)z_8=z_5z_6z_8 ~(\text{using }z_1z_8=z_5z_6).$
		
		Also, $z_1z_2z_3=(z_1z_3)z_2=z_2z_6^2 ~(\text{using } z_1z_3=z_6^2)=(z_2z_6)z_6=z_5z_6z_8 ~(\text{using } z_2z_6=z_5z_8).$
		
		Therefore, $z_1z_2z_3$ reduces to a unique expression.   
		
		Likewise, we can show that the remaining overlapping ambiguities reduce to a unique expression in $z_i$'s after applying different reduction rules. Therefore, $\tilde{\phi}$ is an isomorphism. 
		
		Consider the embedding $$\psi: \mathbb{P}^3 \hookrightarrow \mathbb{P}^9$$ given by $$(x_1, x_2, x_3, x_4) \mapsto (x_1^2, x_2^2, x_3^2, x_4^2, x_1x_2, x_1x_3, x_1x_4, x_2x_3, x_2x_4, x_3x_4).$$ Let $J$ be the homogeneous ideal of $A$ generated by $2 \times 2$ minors of the matrix $$\begin{pmatrix}
			z_1 & z_5 & z_6 & z_7 \\
			z_5 & z_2 & z_8 & z_9 \\
			z_6 & z_8 & z_3 & z_{10} \\
			z_7 & z_9 & z_{10} & z_4 \\
		\end{pmatrix},$$ where $z_i$'s are the homogeneous coordinates of $\mathbb{P}^9.$ Note that $\psi(\mathbb{P}^3)$ is given by $J.$ Further, note that $I \subseteq J$ and $dim (A/I)=dim(A/J)=4$ (computed using Macaulay2 \cite{GS}). Hence, the GIT quotient $T\backslash\backslash(X(v_4))^{ss}_{T}(\mathcal{L}(4\lambda))$ is isomorphic to $(\mathbb{P}^3, \mathcal{O}_{\mathbb{P}^3}(2))$ as a polarized variety.
		
		(v):  Since $T$ is linearly reductive, the restriction map $$\phi: H^{0}(X(v_6), \mathcal{L}^{\otimes k}(4\lambda))^{T} \\ \longrightarrow H^{0}(X(v_{5}), \mathcal{L}^{\otimes k}(4\lambda)^{T}$$ is surjective for all $k \geq 1$. Thus, by \cref{lem5.1}, $X_1^2,$ $X_3^2,$ $X_5^2,$ $X_1X_3,$ $X_1X_5,$ $X_3X_5$ are standard monomials in $H^0(X(v_5), \mathcal{L}(4\lambda))^{T}.$ 		
		Note that by \cref{cor5.4}, $T \backslash \backslash (X(v_5))^{ss}_{T}(\mathcal{L}(4\lambda))$ is projectively normal.  Let $A=\mathbb{C}[z_1,z_2,z_3,z_4,z_5,z_6]$ be the polynomial algebra with variables $z_i$'s. Then there is a surjective homomorphism $$\phi: A \longrightarrow \oplus_{k \in \mathbb{Z}_{\geq 0}}H^{0}(X(v_5), \mathcal{L}^{\otimes k}(4\lambda))^{T},$$ defined by  $$ (z_1, z_2, \ldots, z_6) \mapsto (X_1^2, X_3^2, X_5^2, X_1X_3, X_1X_5, X_3X_5).$$ Let $I$ be the ideal of $A$ generated by the following relations: 
		\begin{center}
			\begin{tabular}{ |p{2.5cm}|p{2.5cm}|p{2.5cm}| }
				\hline
				$z_1z_2=z_4^2$   & $z_1z_3=z_5^2$    & $z_2z_3=z_6^2$ \\
				$z_1z_6=z_4z_5$&  $z_2z_5=z_4z_6$  & $z_3z_4=z_5z_6$\\ 
				\hline
			\end{tabular}.
		\end{center}
		
		Clearly, $I\subseteq \ker\phi$ and $\phi$ induces a homomorphism 
		$$\tilde{\phi}: A/I \longrightarrow\bigoplus\limits_{k \in \mathbb{Z}_{\geq 0}}H^{0}(X(v_5), \mathcal{L}^{\otimes k}(4\lambda))^{T}.$$
		
		Now, we show that $\tilde{\phi}$ is an isomorphism. 
		
		To complete the proof we use  the above relations as a reduction system. We show that diamond lemma (see \cite{bergman}) holds for this reduction system by looking at the reduction of the minimal overlapping ambiguities: $z_1z_2z_3, z_1z_2z_5, z_1z_2z_6, z_1z_3z_4, z_1z_3z_6, z_2z_3z_4, z_2z_3z_5.$
		
		Note that $z_1z_2z_3=z_3(z_1z_2)=z_3z_4^2 ~(\text{using } z_1z_2=z_4^2)=(z_3z_4)z_4=z_4z_5z_6 ~(\text{using } z_3z_4=z_5z_6).$
		
		Again $z_1z_2z_3=z_1(z_2z_3)=z_1z_6^2 ~(\text{using }z_2z_3=z_6^2)=(z_1z_6)z_6=z_4z_5z_6 ~(\text{using }z_1z_6=z_4z_5).$
		
		Also, $z_1z_2z_3=z_2(z_1z_3)=z_2z_5^2 ~(\text{using }z_1z_3=z_5^2)=(z_2z_5)z_5=z_4z_5z_6 ~(\text{using }z_2z_5=z_4z_6).$

		Likewise, we can show that the remaining overlapping ambiguities:
		
		$z_1z_2z_5, z_1z_2z_6, z_1z_3z_4, z_1z_3z_6, z_2z_3z_4, z_2z_3z_5$ reduce to unique expressions in $z_i$'s  namely, $z_4^2z_5,$ $z_4^2z_6,$ $z_4z_5^2,$ $z_5^2z_6,$ $z_4z_6^2,$ $z_5z_6^2$ after applying different reduction rules. Therefore, $\tilde{\phi}$ is an isomorphism. 
		
		Consider the embedding $$\psi: \mathbb{P}^2 \hookrightarrow \mathbb{P}^5$$ given by $$(x_1, x_2, x_3) \mapsto (x_1^2, x_2^2, x_3^2, x_1x_2, x_1x_3, x_2x_3).$$ Let $J$ be the homogeneous ideal of $A$ generated by $2 \times 2$ minors of the matrix $$\begin{pmatrix}
			z_1 & z_4 & z_5  \\
			z_4 & z_2 & z_6  \\
			z_5 & z_6 & z_3 \\
		\end{pmatrix},$$ where $z_i$'s are the homogeneous coordinates of $\mathbb{P}^5.$ Note that $\psi(\mathbb{P}^2)$ is given by $J.$ Further, note that $I \subseteq J$ and $dim (A/I)=dim(A/J)=3$ (computed using Macaulay2 \cite{GS}). Thus, the GIT quotient $T\backslash\backslash(X(v_5))^{ss}_{T}(\mathcal{L}(4\lambda))$ is isomorphic to $(\mathbb{P}^2, \mathcal{O}_{\mathbb{P}^2}(2))$ as a polarized variety. 
	\end{proof}

\subsection*{Acknowledgement.} Both the authors would like to thank the Indian Institute of Technology Bombay for the postdoctoral fellowships and the hospitality during their stay.


\begin{thebibliography}{KP} 
		
		\bibitem{bergman} G. M. Bergman, { The diamond lemma for ring theory,} {\em Adv. Math.} {\bf 29}(2)(1978), 178--218.
		
		
		\bibitem{DW} A. W. M. Dress, W. Wenzel, { A Simple Proof of an Identity Concerning Pfaffians of Skew Symmetric Matrices}, {\em Advances in Mathematics,} {\bf 112}(1995), 120--134.
		
		
		\bibitem{GS} D. R. Grayson, M. E. Stillman, { Macaulay2, a software system for research in algebraic geometry}, Available at \url{https://faculty.math.illinois.edu/Macaulay2/}.
		
		
		
		\bibitem{R} R. Hartshorne, {\em Algebraic Geometry,  Graduate Texts in Mathematics book series,}(GTM {\bf 52}, Springer, New York, 1977.
		
		
		
		\bibitem{Hum1} J. E. Humphreys, {\em Introduction to lie Algebras and Representation Theory}, Springer-Verlag, Berlin, Heidelberg, New York, 1972.
		
		
		\bibitem{Hum2} J. E. Humphreys, {\em Linear Algebraic Groups}, Springer-Verlag, Berlin, Heidelberg, New York, 1975.
		
		
		
		\bibitem{Jan} J. C. Jantzen, {\em Representations of Algebraic Groups,} second edition, Mathematical Surveys and Monographs {\bf 107}, Amer. Math. Soc., Providence, RI, 2003.
		
		
		
		\bibitem{KP} S. S. Kannan, S. K. Pattanayak, { Torus quotients of homogeneous spaces-minimal dimensional Schubert varieties admitting semi-stable points,} {\em Proc. Indian Acad. Sci.} (Math.Sci.) {\bf 119}(4), 2009, 469--485.
		
		
		\bibitem{KS} S. S. Kannnan, P. Sardar, { Torus quotients of homogeneous spaces of the general linear group and the standard representation of certain symmetric groups,} {\em Proc. Indian Acad. Sci.} {\bf 119} (2009), 81--100.
		
		
		
		\bibitem{Kum} S. Kumar, { Descent of line bundles to GIT quotients of flag varieties by maximal torus,} {\em Transformation groups,} {\bf13}(2008), 757--771.
		
		
		\bibitem{LS} V. Lakshmibai, K. N. Raghavan, {\em Standard Monomial Theory Invariant Theoretic Approach,} {Encyclopaedia of Mathematical Sciences} {\bf  137}, Springer-Verlag, Berlin, 2008.
		
		
		\bibitem{L} P. Littelmann, { A Generalization of the Littelwood-Richardson Rule}, {\em Journal of Algebra,} {\bf 130}(1990), 328--368.
		
		
		\bibitem{Mumford} D. Mumford, J. Fogarty, F. Kirwan, {\em Geometric Invariant Theory,} Third Edition, Springer-Verlag, Berlin, Heidelberg, New York, 1994.	
		
		
		\bibitem{NayekPattanayak} A. Nayek, S. K. Pattanayak, S. Jindal, {Projective normality of torus quotients of flag varieties}, {\em Journal of Pure and Applied Algebra,} {\bf 224}(2020).
		
	
		
		\bibitem{NS-I} A. Nayek, P. Saha, {On torus quotients of Schubert varieties in Orthogonal Grassmannian,} {\em Preprint}.
	\end{thebibliography}
\end{document}